\renewcommand{\mod}{\operatorname{mod}\nolimits}
\newcommand{\add}{\operatorname{add}\nolimits}
\newcommand{\Hom}{\operatorname{Hom}\nolimits}
\newcommand{\End}{\operatorname{End}\nolimits}
\renewcommand{\Im}{\operatorname{Im}\nolimits}
\newcommand{\Ker}{\operatorname{Ker}\nolimits}
\newcommand{\Coker}{\operatorname{Coker}\nolimits}
\newcommand{\rrad}{\mathfrak{r}}
\newcommand{\Soc}{\operatorname{Soc}\nolimits}
\newcommand{\gldim}{\operatorname{gldim}\nolimits}
\newcommand{\DTr}{\operatorname{DTr}\nolimits}
\newcommand{\TrD}{\operatorname{TrD}\nolimits}
\newcommand{\Ext}{\operatorname{Ext}\nolimits}
\newcommand{\op}{{\operatorname{op}\nolimits}}
\newcommand{\id}{{\operatorname{id}\nolimits}}
\newcommand{\pd}{{\operatorname{pd}\nolimits}}
\renewcommand{\L}{\Lambda}
\newcommand{\I}{{\mathcal I}}
\renewcommand{\P}{{\mathcal P}}
\newcommand{\extto}{\xrightarrow}
\newtheorem{lem}{Lemma}[section]
\newtheorem{prop}[lem]{Proposition}
\newtheorem{cor}[lem]{Corollary}
\newtheorem{thm}[lem]{Theorem}
\theoremstyle{definition}
\newtheorem{defin}[lem]{Definition}
\newtheorem*{remark}{Remark}
\newtheorem{example}[lem]{Example}
\newtheorem{conj}[lem]{Conjecture}
\begin{document}

\title{Relative homology and maximal $l$-orthogonal modules}
\author[Lada]{Magdalini Lada}
\thanks{Thanks .. for support.}
\address{Magdalini Lada\\Institutt for matematiske fag\\
NTNU\\ N--7491 Trondheim\\ Norway} \email{magdalin@math.ntnu.no}

\date{\today}
\maketitle

\section*{Introduction}

Maximal $l$-orthogonal modules for artin algebras were introduced by Iyama in ~\cite{I}
and ~\cite{I1}, and were used to define a natural setting for developing a `higher
dimensional' Auslander-Reiten theory. In the second of these papers Iyama conjectures
that the endomorphism ring of any two maximal $l$-orthogonal modules, $M_1$ and $M_2$,
are derived equivalent. He proves the conjecture for $l=1$, and for $l>1$ he gives some
orthogonality condition on $M_1$ and $M_2$, such that the
$\End_\L(M_2)^\op$-$\End_\L(M_1)$-bimodule $\Hom_\L(M_2,M_1)$ is tilting, which implies
that the rings $\End_\L(M_2)$ and $\End_\L(M_1)$ are derived equivalent (see ~\cite{H}).
The purpose of this paper is to characterize tilting modules of the form
$\Hom_\L(M_2,M_1)$ in terms of the relative theories induced by the $\L$-modules $M_1$
and $M_2$, thus getting a generilization of Iyama's result. Relative homological algebra,
which we use throughout this paper, was developed by M. Auslander and \O. Solberg in a
series of three papers ~\cite{AS1}, ~\cite{AS2}, ~\cite{AS3} and was used to study the
representation theory of artin algebras.

Iyama's conjecture for maximal $l$-orthogonal modules is motivated by the connection
between maximal $l$-orthogonal modules and non-commutative crepant resolutions, which was
shown in ~\cite{I1}. In particular Iyama proved that if $R$ is a complete regular local
ring of dimension $d\geq 2$ and $\L$ is an $R$-order which is not an isolated
singularity, then a Cohen-Macaulay $\L$-module $M$, which is a generator and cogenerator
for $\mod\L$, gives a non-commutative crepant resolution if and only if $M$ is maximal
$(d-2)$-orthogonal. Considering maximal $l$-orthogonal modules as analogs of modules
giving crepant resolutions, Iyama's conjecture for maximal $l$-orthogonal modules is the
analog of the Bondal-Orlov-Van den Bergh conjecture for crepant resolutions (see
~\cite{BO}, ~\cite{V}).

Let us fix some notation that we will use in the rest of this paper. By $\L$ we will
denote an artin algebra and by $\mod\L$ the category of finitely generated left
$\L$-modules. If $M$ is in $\mod \L$, we denote by $\add M$ the full subcategory of
$\mod\L$, consisting of summands of direct sums of $M$. A \emph{generator-cogenerator}
for $\mod\L$, is a $\L$-module $M$ such that $\add M$ contains all the indecomposable
projective and the indecomposable injective $\L$-modules. Maximal $l$-orthogonal modules
have this property.

In the first section,we recall some concepts from relative homological algebra and we
study further the relative theories induced by generator-cogenerators for $\mod \L$. In
the second section we give Iyama's definition for maximal $l$-orthogonal modules and
state his conjecture. We prove our main theorem which is, as we already mentioned, a
characterization of tilting modules of the form $\Hom_\L(M_2,M_1)$ and we apply it to
maximal $l$-orthogonal modules. In this way we are able to give a condition on two
maximal $l$-orthogonal modules, so that the conjecture is true. We prove that Iyama's
orthogonality condition implies our condition and give an example which shows that our
condition is actually weaker.

\section{Relative and absolute homology}
The aim of this section is first to recall some basic definitions and results on relative
homology, and second to look especially at the relative theory induced by a
generator-cogenerator for $\mod\L$. In particular we relate the global dimension of the
endomorphism ring of a generator-cogenerator  $M$ in $\mod\L$, with the relative - with
respect to $M$ - global dimension of $\L$ (it will be defined precisely later). Moreover
we will compare the relative homology induced by such a module, with the ordinary
absolute homology. For unexplained terminology and results, we refer to ~\cite{AS1} and
~\cite{AS2}.

Let $F$ be an additive sub-bifunctor of $\Ext_\L^1(-,-)\colon (\mod\L)^\op \times
\mod\L\to \mathrm{Ab}$, where $\mathrm{Ab}$ denotes the category of abelian groups. A
short exact sequence $ (\eta)\colon 0\to A \to B \to C \to 0$, in $\mod\L$, is called
\emph{$F$-exact} if $\eta$ is in $F(C,A)$. A $\L$-module $P$ is called
\emph{$F$-projective} if for any $F$-exact sequence $0\to A \to B \to C \to 0$, the
sequence $\Hom_\L(P,B)\to \Hom_\L(P,C)\to 0$ is exact. Dually, a $\L$-module $I$ is
called \emph{$F$-injective} if for any $F$-exact sequence $0\to A \to B \to C \to 0$, the
sequence  $\Hom_\L(B,I)\to \Hom_\L(A,I)\to 0$ is exact. We denote by $\P(F)$ the full
subcategory of $\mod\L$ consisting of all $F$-projective $\L$-modules and by $\I(F)$ the
one consisting of all $F$-injective $\L$-modules. Note that if $\P(\L)$ and $\I(\L)$
denote the subcategories of projective and injective modules in $\mod\L$ respectively, we
have that $\P(\L)\subseteq\P(F)$ and $\I(\L)\subseteq\I(F)$.

In this paper we work with sub-bifunctors of $\Ext_\L^1(-,-)$ of the following special
form. Let $M$ be in $\mod \L$. For each pair of $\L$-modules $A$ and $C$ we define

\begin{multline}F_M(C,A)=\{0\to A\to B \to C\to 0 \mid \Hom_\L(M,B)\to \notag\\\Hom_\L(M,C)\to 0 \mathrm{\
is \ exact}\}
\end{multline}
and dually
\begin{multline}F^M(C,A)=\{0\to A\to B \to C\to 0 \mid \Hom_\L(B,M)\to \notag\\
\Hom_\L(A,M)\to 0 \mathrm{\ is \ exact}\}.
\end{multline}

It was shown in ~\cite{AS1}, that the above assignments give additive sub-bifunctors of
$\Ext_\L^1(-,-)$. It is straightforward that $\P(F_M)=\P(\L)\cup \add M$ and
$\I(F^{M})=\I(\L)\cup \add M$. Moreover, it is known that for any short exact sequence
$0\to A\to B \to C\to 0$ and any $\L$-module $M$, the sequence $\Hom_\L(M,B)\to
\Hom_\L(M,C)\to 0$ is exact if and only if the sequence $\Hom_\L(B,\DTr M)\to
\Hom_\L(A,\DTr M)\to 0$ is exact. Using this fact it is easy to see that $F_M=F^{\DTr
M}$, so $\I(F_M)=\I(\L)\cup \add \DTr M$, and $F^M=F_{\TrD M}$, so $\P(F^M)=\P(\L)\cup
\add \TrD M$. Hence we have a complete picture of the $F$-projectives and $F$-injectives
for sub-bifunctors of the above form.

Another nice property of a sub-bifunctor $F=F_M$ of $\Ext_\L^1(-,-)$ is that it \emph{has
enough projectives}, in the sense that for any $\L$-module $C$, there exists an $F$-exact
sequence $0\to K \to P\to C\to 0$, with $P$ in $\P(F)$. Note that the map $P\to C$ is
nothing but a right $\P(F)$-approximation of $C$ which we know is an epimorphism since
$\P(\L)\subseteq \P(F)$. Dually we see that $F=F_M$ \emph{has enough injectives}.

Let $F=F_M$ and $C$ in $\mod\L$. An \emph{$F$-projective resolution} of $C$ is an exact
sequence
\[ \cdots\to P_l\extto{f_l} P_{l-1}\to \cdots \to P_1\extto{f_1} P_0 \extto{f_0} C \to 0\]
where $P_i$ is in $\P(F)$ and each short exact sequence $0\to\Im f_{i+1}\to P_i \to \Im
f_i\to 0$ is $F$-exact. Note that such a sequence exists for any $\L$-module since
$F=F_M$ has enough projectives. The sequence is called a \emph{minimal $F$-projective
resolution} if in addition each $P_i \to \Im f_i$ is a minimal map. The
\emph{F-projective dimension} of $C$, which we denote by $\pd_F C$, is defined to be the
smallest $n$ such that there exists an $F$-projective resolution
\[0\to P_n\to\cdots\to P_1\to P_0 \to C\to 0.\] If such $n$ does not exist we set
$\pd_F C=\infty$. Dually, we can define the notion of a \emph{(minimal) F-injective
resolution} and the \emph{F-injective dimension}, $\id_F C$, of $C$. Then, the
\emph{F-global dimension} of $\L$ is defined as:
\[\gldim_F \L =\sup\{\pd_F C  \mid C\in\mod\L\}\left(=\sup\{\id_F C \mid C \in\mod\L\}\right).\]

In the special case where $M$ is a generator-cogenerator for $\mod\L$, we have the
following nice connection between the global dimension of the endomorphism ring of $M$
and the relative global dimension of $\L$.

\begin{prop}\label{gldim}
Let $M$ be a generator-cogenerator for $\mod\L$. Then, for any positive integer $l$, the
following are equivalent:
\begin{enumerate}
\item[(a)] $\gldim\End_{\L}(M)\leq l+2$
\item[(b)] $\gldim_{F_{M}}\L\leq l$
\item[(c)] $\gldim_{F^{M}}\L\leq l$.
\end{enumerate}
\end{prop}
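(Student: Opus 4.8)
Write $\G=\End_\L(M)^{\op}$, so that $\Hom_\L(M,-)$ becomes a functor $\mod\L\to\mod\G$; since $\gldim\G=\gldim\End_\L(M)$ (left–right symmetry of global dimension for artin algebras), condition (a) says exactly that $\gldim\G\le l+2$. The plan is to reduce everything to the single identity
\[\pd_{F_M}C=\pd_\G\Hom_\L(M,C)\qquad(C\in\mod\L),\]
and then to pass from (b) to (c) by duality. To prove the identity I would build the minimal $F_M$-projective resolution of $C$ by hand: because $M$ is a generator we have $\P(F_M)=\add M$, every minimal right $\add M$-approximation $M_i\to\Omega^i$ is an epimorphism, and the short exact sequences $0\to\Omega^{i+1}\to M_i\to\Omega^i\to0$ are $F_M$-exact by the very definition of a right approximation. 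Now $\Hom_\L(M,-)$ is left exact and restricts to an equivalence $\add M\xrightarrow{\sim}\operatorname{proj}\G$ carrying right-minimal maps to right-minimal maps; applying it, the $F_M$-exactness of each piece makes the image complex exact with projective terms, and (using $\Hom_\L(M,\Omega^i)\cong\Omega^i_\G\Hom_\L(M,C)$, which is just left exactness) the maps onto the relative syzygies become projective covers. Hence $\Hom_\L(M,-)$ sends the minimal $F_M$-projective resolution of $C$ to the minimal projective resolution of $\Hom_\L(M,C)$ over $\G$; comparing lengths gives the identity, and taking the supremum over $C$ yields $\gldim_{F_M}\L=\sup_{C\in\mod\L}\pd_\G\Hom_\L(M,C)$.

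For $(b)\Rightarrow(a)$, let $N$ be an arbitrary $\G$-module and take two steps of its minimal projective resolution $\Hom_\L(M,M_1)\xrightarrow{\Hom_\L(M,g)}\Hom_\L(M,M_0)\to N\to0$ with $M_0,M_1\in\add M$ and $g\colon M_1\to M_0$ (every projective $\G$-module is $\Hom_\L(M,M')$ with $M'\in\add M$, and a map between two such is $\Hom_\L(M,-)$ of a map in $\add M$). Left exactness gives $\Omega^2_\G N\cong\Ker\Hom_\L(M,g)=\Hom_\L(M,\Ker g)$, so by the identity $\pd_\G N\le2+\pd_\G\Hom_\L(M,\Ker g)=2+\pd_{F_M}(\Ker g)\le2+\gldim_{F_M}\L\le l+2$; hence $\gldim\G\le l+2$.

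For $(a)\Rightarrow(b)$ — the only step that uses the cogenerator hypothesis — fix $C\in\mod\L$ and set $d=\pd_{F_M}C$; we may assume $d\ge1$, the case $d=0$ being trivial. Embed $C$ into an injective $M^0\in\add M$, let $M^0\to C'$ be the cokernel, embed $C'$ into an injective $M^1\in\add M$, and let $f\colon M^0\to M^1$ be the composite; then $\Ker f$ is the image of $C\hookrightarrow M^0$, so left exactness gives $\Ker\Hom_\L(M,f)\cong\Hom_\L(M,C)$. Thus $\Hom_\L(M,C)$ is a second syzygy of $N:=\Coker\Hom_\L(M,f)$, and since $\pd_\G\Hom_\L(M,C)=d\ge1$, the minimal second syzygy of $N$ — a direct summand of $\Hom_\L(M,C)$ with projective complement — again has projective dimension $d$, so $\pd_\G N=d+2$. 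Therefore $d+2\le\gldim\G\le l+2$, i.e. $d\le l$; as $C$ was arbitrary, $\gldim_{F_M}\L\le l$. Finally, $(b)\Leftrightarrow(c)$ is formal: the usual duality $D\colon\mod\L\to\mod\L^{\op}$ sends the generator-cogenerator $M$ to a generator-cogenerator $DM$ with $\End_{\L^{\op}}(DM)\cong\End_\L(M)^{\op}$ (same global dimension) and carries $F^M$-exact sequences over $\L$ to $F_{DM}$-exact sequences over $\L^{\op}$, so $\gldim_{F^M}\L=\gldim_{F_{DM}}\L^{\op}$; the already-proved equivalence $(a)\Leftrightarrow(b)$, applied to $DM$ over $\L^{\op}$, then reads $(a)\Leftrightarrow(c)$.

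I expect the genuine work to sit in the first paragraph — verifying that $\Hom_\L(M,-)$ really transports the minimal $F_M$-projective resolution onto the minimal $\G$-projective resolution, with careful attention to epimorphness of approximations, left exactness, and right-minimality — and in the syzygy bookkeeping of the third paragraph, where one must ensure the relevant second syzygy is genuinely non-projective so that the projective dimension jumps by exactly $2$; this is precisely where the reduction to $d\ge1$ is needed. Everything else is formal manipulation and an application of standard duality.
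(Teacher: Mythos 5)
Your proof is correct and follows essentially the same route as the paper's: both directions rest on the projectivization equivalence $\add M\simeq\operatorname{proj}\End_\L(M)^{\op}$ together with two extra steps of a coresolution in $\add M$ (available because $M$ is a cogenerator) to convert relative syzygies into second syzygies over the endomorphism ring, and conversely. The only cosmetic differences are that you isolate the identity $\pd_{F_M}C=\pd_{\G}\Hom_\L(M,C)$ and use the ordinary injective copresentation where the paper splices in an $F^{M}$-injective one, and that you make the ``symmetric'' step (b)$\Leftrightarrow$(c) explicit via the duality $D$.
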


\begin{proof}
(a)$\Rightarrow$(b) Let $N$ be a $\L$-module and let
\[\cdots\to M_l\to M_{l-1} \extto {f_{l-1}} \cdots\to M_1\to M_0\to N\to 0\] be a
minimal $F_M$-projective resolution of $N$. Set $K_{l-1}=\Ker f_{l-1}$.  We will show
that $K_{l-1}$ is in $\add M$. Applying the functor $\Hom_\L(M,-)$ to the above sequence,
we get the long exact sequence
\begin{multline} \eta_{1} : \ldots\to \Hom_\L(M,M_{l})\to \Hom_\L(M,M_{l-1})
\extto{\Hom(M,f_{l-1})} \cdots\to \notag \\\Hom_\L(M,M_1)\to \Hom_\L(M,M_0)\to
\Hom_\L(M,N)\to 0.
\end{multline}
Note that $\Ker{\Hom(M,f_{l-1})}= \Hom(M,K_{l-1})$. Let also \[0\to N\to
M^0\extto{f}M^1\to\cdots\]  be the beginning of an $F^M$-injective resolution of $N$.
Applying the functor $\Hom_\L(M,-)$ to this sequence, we get the long exact sequence
\[\eta_{2} : 0\to\Hom_\L(M,N)\to\Hom_\L(M,M^0)\extto{\Hom_\L(M,f)}\Hom_\L(M,M^1)\to X\to, 0\]
where the $\End_\L(M)^\op$-module $X$ is the cokernel of the map $\Hom_\L(M,f)$.
\\Combining now the sequences $\eta_1$and $\eta_2$ we get the long exact sequence
\begin{multline}
 \ldots\to \Hom_\L(M,M_{l})\to \Hom_\L(M,M_{l-1}) \to \cdots\to
\Hom_\L(M,M_1)\to \notag \\ \Hom_\L(M,M_0)\to \Hom_\L(M,M^0)\to\Hom_\L(M,M^1)\to X\to 0
\end{multline}
Since the $\L$-modules $M_i$ for $i=0,1,\ldots$ and $M^j$ for $j=0,1$ are in $\add M$,
the $\End_{\L}(M)^\op$-modules $\Hom_\L(M,M_i),i=0,1,\ldots$ and $\Hom_\L(M,M^j),j=0,1$
are projective. So the $\End_{\L}(M)^\op$-module $\Hom_\L(M,K_{l-1})$ is an $(l+2)$-th
syzygy of the $\End_{\L}(M)^\op$-module $X$. Since $\gldim\End_{\L}(M)\leq l+2$, the
module $\Hom_\L(M,K_{l-1})$ has to be a projective $\End_{\L}(M)^\op$-module, so
$K_{l-1}$ is in
$\add M$. Thus $\gldim_{F_{M}}\L\leq l$.\\
(b)$\Rightarrow$(a) Let $X$ be an $\End_{\L}(M)^{\op}$-module and let
\begin{multline}
 \ldots\to \Hom_\L(M,M_{l+2})\extto{d_{l+2}} \Hom_\L(M,M_{l+1}) \to \cdots\to
\Hom_\L(M,M_2)\extto{d_2} \notag \\ \Hom_\L(M,M_1)\extto{d_1} \Hom_\L(M,M_0)\to X\to 0
\end{multline}
be a projective resolution of $X$. Since $M$ is a generator for $\mod\L$, the functor
$\Hom_\L(M,-)$ is full and faithfull hence we have that $\Ker d_i=\Hom_\L(M,K_i)$ where
$K_i$ is the kernel of the morphism $f_i\colon M_i\to M_{i-1}$, with $\Hom_\L(M,f)=d_i$,
for $i>0$. We will show that $\Ker d_{l+1}$ is a projective $\End_\L(M)^{\op}$-module.
The sequence
\[\cdots\to M_{l+2}\extto{f_{l+2}} M_{l+1}\to\cdots\to M_2\to K_1\to 0\]
is an $F_M$-projective resolution of $K_1$. Since $\gldim_{F_{M}}\L\leq l$, we have that
$K_{l+1}$ is in $\add M$ and hence $\Hom_{\L}(M,K_{l+1})$ is a projective
$\End_\L(M)^{\op}$-module. So $\gldim\End_{\L}(M)\leq l+2$.

Thus we have proved the equivalence of $(a)$ and $(b)$. The proof of the equivalence of
$(a)$ and $(c)$ is symmetric.
\end{proof}

Note that the above result was basically already known and can be found, for example in
~\cite{EHIS}, in a different form. But, besides the fact that using relative homology
helps in having a simpler statement, there is also the advantage of getting extra
information about coresolutions of $\L$-modules in $\add\DTr M$ and resolutions of
$\L$-modules in $\add\TrD M$, since $F_M=F^{\DTr M}$ and $F^M=F_{\TrD M}$. In particular,
for selfinjective algebras, we have the following consequence.

\begin{cor}\label{selfinjective}
Let $\L$ be a selfinjective artin algebra and $X$ be in $\mod \L$. Then, for any positive
integer $l$, the following are equivalent:
\begin{enumerate}
\item[(a)] $\gldim\End_{\L}(\L\oplus X)\leq l+2$,
\item[(b)] $\gldim\End_{\L}(\L\oplus \DTr X)\leq l+2$,
\item[(c)] $\gldim\End_{\L}(\L\oplus \TrD X)\leq l+2$.
\end{enumerate}
\end{cor}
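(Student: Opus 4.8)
The plan is to reduce everything to Proposition \ref{gldim} by a suitable choice of generator-cogenerator. Since $\L$ is selfinjective, the module $M = \L \oplus X$ is a generator-cogenerator for $\mod\L$: indeed, $\add M$ contains $\L$, hence all indecomposable projectives, and because $\L$ is selfinjective the indecomposable injectives are precisely the indecomposable projectives, so they lie in $\add M$ as well. Thus Proposition \ref{gldim} applies to $M$, and statement (a) is equivalent to $\gldim_{F_M}\L \le l$, which is in turn equivalent to $\gldim_{F^M}\L \le l$.

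Next I would invoke the identifications recorded in the text just before the statement of Proposition \ref{gldim}, namely $F_M = F^{\DTr M}$ and $F^M = F_{\TrD M}$. Applying the first with $M = \L\oplus X$: since $\DTr$ kills projective summands and $\DTr(\L\oplus X) = \DTr X$ up to projective summands, one has $F_{\L\oplus X} = F^{\DTr X}$ (here one should be slightly careful that adding or removing projective/injective summands does not change the sub-bifunctor, which follows from $\P(\L)\subseteq\P(F)$, $\I(\L)\subseteq\I(F)$ and the explicit description $\P(F_M)=\P(\L)\cup\add M$). Now $F^{\DTr X}$ is the sub-bifunctor $F^{N}$ attached to $N = \L \oplus \DTr X$, which is again a generator-cogenerator by the selfinjectivity argument above. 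Hence $\gldim_{F^M}\L = \gldim_{F^{\L\oplus\DTr X}}\L$, and Proposition \ref{gldim} applied to $\L\oplus\DTr X$ shows this is $\le l$ if and only if $\gldim\End_\L(\L\oplus\DTr X) \le l+2$, i.e. (a) $\Leftrightarrow$ (b). The equivalence (a) $\Leftrightarrow$ (c) is entirely dual, using $F^{\L\oplus X} = F_{\TrD(\L\oplus X)} = F_{\L\oplus\TrD X}$ together with part (b) of Proposition \ref{gldim} applied to the generator-cogenerator $\L\oplus\TrD X$.

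The only genuinely delicate point is the bookkeeping of projective and injective summands when passing through $\DTr$ and $\TrD$: one must check that $F_{\L\oplus X}$ really equals $F^{\DTr X}$ and not merely $F^{\DTr(\L\oplus X)}$, and similarly on the injective side. This is harmless because $F_M$ depends on $M$ only through $\add M$, and $\add(\DTr(\L\oplus X)) \cup \I(\L) = \add(\DTr X)\cup\I(\L)$ since $\DTr$ annihilates the projective summand $\L$; combined with the standard fact $\I(F_M) = \I(\L)\cup\add\DTr M$ this gives $\I(F_{\L\oplus X}) = \I(\L)\cup\add\DTr X = \I(F^{\L\oplus\DTr X})$, and a sub-bifunctor of $\Ext^1_\L(-,-)$ of the form $F_M$ is determined by its class of injectives. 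Once this identification is in place, the corollary is immediate from three applications of Proposition \ref{gldim}.
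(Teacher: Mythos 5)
Your proposal is correct and is exactly the argument the paper intends: the corollary is stated as an immediate consequence of Proposition \ref{gldim} combined with the identities $F_M=F^{\DTr M}$ and $F^M=F_{\TrD M}$, using selfinjectivity to see that $\L\oplus X$, $\L\oplus\DTr X$ and $\L\oplus\TrD X$ are all generator-cogenerators. Your extra care with the projective/injective summands absorbed or created by $\DTr$ and $\TrD$ is the right (and only) delicate point, and you handle it correctly.
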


\begin{remark}A straightforward consequence of the above corollary is that if $\L$ is a
selfinjective artin algebra, then $\L\oplus X$ is an Auslander generator (that is, a
generator-cogenerator for $\mod \L$ such that the global dimension of its endomorphism
ring gives the representation dimension of $\L$) if and only if $\L\oplus\DTr X$ is an
Auslander generator.
\end{remark}
Let $A$ and $C$ be in $\mod\L$. Knowing that for sub-bifunctors $F=F_M$ of
$\Ext_\L^1(-,-)$ any $\L$-module has an $F$-projective and an $F$-injective resolution,
one can define the right derived functors $\Ext_F^i(C,-)$ and $\Ext_F^i(-,A)$ of
$\Hom_\L(C,-)$ and $\Hom_\L(-,A)$ respectively, in the same way as in the case
$\P(F)=\P(\L)$. Moreover, it can be proved that $\Ext_F^i(C,-)(A)$ is then isomorphic to
$\Ext_F^i(-,A)(C)$ and that for $i=1$ we have that $\Ext_F^1(C,A)=F(C,A)$.

Although $\Ext_F^1(C,A)$ is a subgroup of $\Ext_\L^1(C,A)$, very little is known about
how the $\Ext_{\L}^i(C,A)$-groups and the relative $\Ext_{F}^i(C,A)$-groups are related
for $i>1$. In the next proposition we consider a case where these two coincide. This will
help us in the next section to compare Iyama's condition on maximal orthogonal modules to
our result. For $X$ and $Y$ in $\mod\L$ we write ${X\perp_k Y}$ if $\Ext_\L^i(X,Y)=(0)$,
for $0<i\leq k$. Abusing the notation, we will write ${X\perp_k Y}$ even for $k=0$ but
this will mean no condition on $X$ and $Y$.

\begin{prop}\label{absolut-relative}
Let $M_{1}$ and $M_{2}$ be in $\mod\L$ such that $M_2$ is a generator for $\mod \L$ and
$M_1$ is a cogenerator for $\mod \L$. The following are equivalent for a positive integer
$k$:

\begin{enumerate}
\item[(a)]$M_2\perp_{k} M_{1}$,

\item[(b)]For any $C$ in $\mod\L$, $\Ext_{F_{M_2}}^{i}(C,M_1) \simeq \Ext_{\L}^{i}(C,M_1)$, for
$0<i\leq k$
\item[(c)]For any $D$ in $\mod\L$, $\Ext_{F^{M_1}}^{i}(M_2,D)\simeq \Ext_{\L}^{i}(M_2,D)$, for $0<i\leq
k$.
\end{enumerate}
\end{prop}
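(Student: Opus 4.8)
The plan is to prove the equivalence by showing $(a)\Rightarrow(b)$, $(b)\Rightarrow(a)$, and then noting $(a)\Leftrightarrow(c)$ is symmetric, using that $F_{M_2}$ has enough projectives and $F^{M_1}=F_{\TrD M_1}$ has enough injectives. Throughout, the key mechanism is the comparison of an $F_{M_2}$-projective resolution with an ordinary projective resolution via dimension shifting.

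\medskip

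For $(a)\Rightarrow(b)$, let $C$ be a $\L$-module and take an $F_{M_2}$-projective resolution $\cdots\to P_1\to P_0\to C\to 0$ with each $P_j\in\P(F_{M_2})=\P(\L)\cup\add M_2$. I would compare this to an ordinary projective resolution $\cdots\to Q_1\to Q_0\to C\to 0$ and note that, since the $F_{M_2}$-resolution is an exact sequence in $\mod\L$ (not merely a relative one), the comparison theorem gives a chain map between the two resolutions, unique up to homotopy, inducing an isomorphism on cohomology of $\Hom_\L(-,M_1)$ provided the two complexes $\Hom_\L(P_\bullet,M_1)$ and $\Hom_\L(Q_\bullet,M_1)$ agree in cohomology. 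The point is that $\Ext^i_{F_{M_2}}(C,M_1)$ is the cohomology of $\Hom_\L(P_\bullet,M_1)$ while $\Ext^i_\L(C,M_1)$ is that of $\Hom_\L(Q_\bullet,M_1)$. To identify these I would proceed by induction on $i$, shifting through the $F_{M_2}$-exact sequences $0\to\Omega_{j+1}\to P_j\to\Omega_j\to 0$ (where $\Omega_0=C$): applying $\Hom_\L(-,M_1)$ gives a long exact sequence whose connecting maps compute $\Ext^i_{F_{M_2}}$, and one needs the $\Ext^{\geq1}_\L(M_2,M_1)$-groups to vanish so that the relative long exact sequence matches the absolute one. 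Concretely, for $P_j=M_2'\in\add M_2$, the hypothesis $M_2\perp_k M_1$ forces $\Ext^i_\L(P_j,M_1)=0$ for $0<i\leq k$, which is exactly what makes the relative $\delta$-functor $\Ext^\bullet_{F_{M_2}}(-,M_1)$ coincide with $\Ext^\bullet_\L(-,M_1)$ in that range — both are erasable $\delta$-functors on $\mod\L$ in degrees $1,\dots,k$ that agree in degree $0$.

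\medskip

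For $(b)\Rightarrow(a)$: apply the hypothesis to a cleverly chosen $C$. The natural choice is to take $C=\Omega^{-?}$ or rather to use $M_2$ itself together with the fact that $M_2$ is a generator, so that $\P(\L)\subseteq\add M_2$; then an $F_{M_2}$-projective resolution of any module can be taken with all terms in $\add M_2$, but more to the point, I would argue that if some $\Ext^i_\L(M_2,M_1)\neq0$ for $0<i\leq k$, then choosing $C$ to be an appropriate syzygy makes $\Ext^1_{F_{M_2}}(C,M_1)\subsetneq\Ext^1_\L(C,M_1)$, contradicting (b) — but one must be careful because (b) is stated for all $i\le k$, not just $i=1$. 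Cleaner: take a minimal projective resolution $\cdots\to Q_1\to Q_0\to M_2^{(?)}$, or better, simply apply (b) with $C$ a module whose projective cover realizes the first nonvanishing Ext; since $M_2$ is a generator the projectives $Q_j$ lie in $\add M_2=\P(F_{M_2})$, so an ordinary projective resolution of $C$ is already an $F_{M_2}$-projective resolution, forcing $\Ext^i_{F_{M_2}}(C,M_1)=\Ext^i_\L(C,M_1)$ \emph{tautologically} for such $C$ — so this approach needs the converse direction handled by testing on $C=M_2$ directly via a short $F_{M_2}$-exact sequence $0\to K\to P\to M_2\to 0$ with $P\in\P(\L)$, which is $F_{M_2}$-exact, and unravelling (b) for $i=1$ gives $\Ext^1_\L(M_2,M_1)$ as the image of a map that (b) forces to be everything, then dimension shift upward.

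\medskip

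\textbf{The main obstacle} I anticipate is the $(b)\Rightarrow(a)$ direction: one has to extract vanishing of \emph{all} $\Ext^i_\L(M_2,M_1)$ for $0<i\leq k$ from an isomorphism statement about $\Ext_{F_{M_2}}^i(C,M_1)$ for \emph{arbitrary} $C$, and the right test module $C$ is not completely obvious — the natural guess $C=M_2$ is degenerate because a projective resolution of $M_2$ is automatically $F_{M_2}$-projective (as $\P(\L)\subseteq\add M_2$ when $M_2$ is a generator), so one learns nothing. The resolution is to instead test on modules $C$ that are \emph{not} in $\add M_2$ but whose minimal $F_{M_2}$-projective resolution genuinely differs from the minimal projective resolution, e.g. a cosyzygy of $M_2$; equivalently, invoke the already-known dual picture $F^{M_1}=F_{\TrD M_1}$ so that (b) for $F_{M_2}$ and (c) for $F^{M_1}$ are dual, and prove $(c)\Rightarrow(a)$ instead, where the test module $D$ can be taken to be $M_1$ itself (legitimate since $M_1$ being a cogenerator does \emph{not} make an injective coresolution of $M_1$ automatically $F^{M_1}$-injective in a degenerate way — wait, it does; $\I(\L)\subseteq\I(F^{M_1})$). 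So ultimately the cleanest route is: use an $F_{M_2}$-exact sequence $0\to M_1\to I\to C'\to 0$ with $I$ injective (which exists and is $F_{M_2}$-exact since injectives are $F_{M_2}$-projective... no — injectives need not be $F_{M_2}$-projective). I would therefore in the writeup pin down the test object by dimension-shifting from the defining $F_{M_2}$-exact sequence of $M_1$ as an $F^{M_2}$-injective, i.e. work on the injective side where $M_1$'s coresolution is controlled, and transport back via $F_{M_2}=F^{\DTr M_2}$; getting this bookkeeping exactly right is the real content of the proof.
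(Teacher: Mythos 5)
Your $(a)\Rightarrow(b)$ direction is essentially the paper's argument: dimension-shift along the $F_{M_2}$-exact syzygy sequences $0\to K_{i+1}\to P_{i+1}\to K_i\to 0$ of an $F_{M_2}$-projective resolution, using that $\Ext^j_\L(P_{i+1},M_1)=0$ for $0<j\le k$ (which follows from $M_2\perp_k M_1$ because $P_{i+1}\in\add M_2$, $M_2$ being a generator) to identify $\Ext^1_{F_{M_2}}(K_i,M_1)$ and $\Ext^1_\L(K_i,M_1)$ as the same cokernel of $\Hom_\L(P_{i+1},M_1)\to\Hom_\L(K_{i+1},M_1)$, and then to climb up in degree. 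That part is fine once the bookkeeping you sketch is written out; the ``erasable $\delta$-functor'' framing should be replaced by the explicit shift, since the relative $\Ext$ is only a $\delta$-functor with respect to $F_{M_2}$-exact sequences.

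The genuine gap is in $(b)\Rightarrow(a)$, which you flag as the ``main obstacle'' and never resolve. In fact it is the trivial direction, and the test module you reject, $C=M_2$, is the right one. Since $M_2\in\add M_2\subseteq\P(F_{M_2})$, the module $M_2$ is itself $F_{M_2}$-projective; its minimal $F_{M_2}$-projective resolution is $0\to M_2\extto{\id}M_2\to 0$, so $\Ext^i_{F_{M_2}}(M_2,M_1)=0$ for all $i>0$, and (b) applied to $C=M_2$ gives $\Ext^i_\L(M_2,M_1)=0$ for $0<i\le k$, which is (a). Your reason for dismissing this choice --- that an ordinary projective resolution of $M_2$ is ``automatically an $F_{M_2}$-projective resolution,'' making (b) tautological at $C=M_2$ --- is wrong on two counts. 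First, an ordinary projective resolution is not in general an $F_{M_2}$-projective resolution: its terms lie in $\P(\L)\subseteq\P(F_{M_2})$, but the syzygy sequences $0\to\Omega^{j+1}M_2\to Q_j\to\Omega^{j}M_2\to 0$ need not be $F_{M_2}$-exact, and an $F$-projective resolution requires both conditions. Second, even setting that aside, the relevant fact is that the relative $\Ext$ groups vanish identically on any $F_{M_2}$-projective object, so (b) at $C=M_2$ is exactly the nonvacuous assertion that the absolute groups vanish too. The ensuing detour through cosyzygies, injective coresolutions and $F_{M_2}=F^{\DTr M_2}$ is unnecessary and, as written, does not terminate in a proof.
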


\begin{proof}
(a)$\Rightarrow$(b) Let $C$ be in $\mod\L$ and let
\[\cdots \to P_l\extto{f_l} \cdots \to P_1\extto{f_1} P_0 \extto{f_0} C\to 0\]
be a minimal $F_{M_2}$-projective resolution of $M_1$. Set $K_{-1}=M_1$ and $K_{i}=\Ker
f_i$, for $i=0,1,2,\ldots$. We first show that for all $i=-1,0,1,2,\ldots$ we have
\[\Ext_{F_{M_2}}^{1}(K_i,M_1)\simeq\Ext_{\L}^{1}(K_i,M_1).\]
Applying the functor $\Hom_{\L}(-,M_1)$ to the short exact sequence
\[ 0\to K_{i+1}\to P_{i+1}\to K_{i}\to 0\] we
get the long exact sequence
\[\Hom_\L(P_{i+1},M_1)\to \Hom_\L(K_{i+1},M_1)\to \Ext_{\L}^1(K_i,M_1)\to
\Ext_{\L}^1(P_{i+1},M_1).\] But since $M_2\perp_k M_1$ and $P_{i+1}$ is in $\add M_2$, we
have $\Ext_{\L}^1(P_{i+1},M_1)=(0)$. So $\Ext_{\L}^1(K_i,M_1)$ is the cokernel of the map
$\Hom_\L(K_{i+1},M_1)\to \Hom_\L(P_{i+1},M_1)$. The short exact sequence \[ 0\to
K_{i+1}\to P_{i+1}\to K_{i}\to 0\] is $F_{M_2}$-exact so we also have the following long
exact sequence
\[\Hom_\L(P_{i+1},M_1)\to \Hom_\L(K_{i+1},M_1)\to \Ext_{F_{M_2}}^1(K_i,M_1)\to
\Ext_{F_{M_2}}^1(P_{i+1},M_1)\] and since $\Ext_{F_{M_2}}^1(P_{i+1},M_1)=(0)$,
$\Ext_{F_{M_2}}^1(K_i,M_1)$ is the cokernel of the map $\Hom_\L(K_{i+1},M_1)\to
\Hom_\L(P_{i+1},M_1)$. Hence
\[\Ext_{F_{M_2}}^{1}(K_i,M_1)\simeq\Ext_{\L}^{1}(K_i,M_1),\]  for all
$i=-1,0,1,2,\ldots.$ In particular we have that
\[\Ext_{F_{M_2}}^{1}(C,M_1)\simeq\Ext_{\L}^{1}(C,M_1).\]Next we show that
for all $i=-1,0,1,2,\ldots$ and $2\leq j\leq k$
\[\Ext_{\L}^{j-1}(K_{i+1},M_1)\simeq\Ext_{\L}^{j}(K_{i},M_1).\] To do
this, we apply the functor $\Hom_{\L}(-,M_1)$ to the short exact sequence
\[ 0\to K_{i+1}\to P_{i+1}\to K_{i}\to 0\]
and we get the long exact sequences
\[\Ext_{\L}^{j-1}(P_{i+1},M_1)\to \Ext_{\L}^{j-1}(K_{i+1},M_1)\to
\Ext_{\L}^{j}(K_{i},M_1)\to \Ext_{\L}^{j}(P_{i+1},M_1).\] But since $M_2 \perp_k M_1$ and
$P_{i+1}$ is in $\add M_2$, we have that $\Ext_{\L}^{j-1}(P_{i+1},M_1)=0$ and
$\Ext_{\L}^{j}(P_{i+1},M_1)=0$ which implies that
\[\Ext_{\L}^{j-1}(K_{i+1},M_1)\simeq\Ext_{\L}^{j}(K_{i},M_1).\]

Now using the above we can see that for all $2\leq i\leq k$ we have
\[\Ext_{F_{M_2}}^{i}(C,M_1)\simeq\Ext_{F_{M_2}}^{i-1}(K_0,M_1)\simeq\cdots\simeq
\Ext_{F_{M_2}}^{1}(K_{i-2},M_1)\simeq\]\[\Ext_{\L}^{1}(K_{i-2},M_1)\simeq
\Ext_{\L}^{2}(K_{i-3},M_1)\simeq\cdots\simeq\Ext_{\L}^{i}(C,M_1)\] which
completes the proof. \\
(b)$\Rightarrow$(a) Set $C=M_2$. Then we have that $\Ext_{F_{M_2}}^{i}(M_2,M_1)=(0)$,
since $M_2$ is $F_{M_2}$-projective, hence $\Ext_{\L}^{i}(C,M_1)=(0)$, for $0<i\leq k$.

The proof of (a)$\Leftrightarrow$(c) is symmetric.
\end{proof}

\section{Cotilting and maximal orthogonal modules}
In this section we state and prove the main theorem and give the connections with Iyama's
result. But let us start by recalling Iyama's definition for maximal $l$-orthogonal
$\L$-modules. Set $M^{\perp_k}=\{Y\in\mod\L \mid M\perp_k Y\}$ and
${^{\perp_k}M}=\{X\in\mod\L \mid X\perp_k M\}$.
\begin{defin} A $\L$-module $M$ is called \emph{maximal \ $l$-orthogonal}\ if \[M^{\perp_l}
=\add M= {^{\perp_l}M}.\]
\end{defin}
The following conjecture was stated in ~\cite{I1}.

\begin{conj}[O. Iyama]
Let $M_1$ and $M_2$ be maximal $l$-orthogonal in $\mod\L$. Then their endomorphism rings,
$\End_\L(M_1)$ and $\End_\L(M_2)$, are derived equivalent.
\end{conj}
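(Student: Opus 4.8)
The plan is to realize the derived equivalence through tilting theory, following the strategy announced in the introduction. Write $\Gamma_1=\End_\L(M_1)$ and $\Gamma_2=\End_\L(M_2)$. Since $M_1$ and $M_2$ are maximal $l$-orthogonal they are generator-cogenerators, and it is part of Iyama's theory that $\gldim\Gamma_i\le l+2$; equivalently, by Proposition~\ref{gldim}, $\gldim_{F_{M_i}}\L\le l$. By Happel's theorem (\cite{H}), to obtain a derived equivalence it suffices to exhibit a tilting module whose endomorphism ring realizes the other algebra, and the first candidate is the bimodule $T=\Hom_\L(M_2,M_1)$. So I would first record the three conditions under which $T$ is a tilting $\Gamma_2^\op$-module: finite projective dimension, self-orthogonality $\Ext^i_{\Gamma_2^\op}(T,T)=0$ for $i>0$, and a finite $\add T$-coresolution of $\Gamma_2^\op$.

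The next step is to translate these conditions into $\mod\L$. Because $M_2$ is a generator, $\Hom_\L(M_2,-)$ is full and faithful and sends $\add M_2$ onto the projective $\Gamma_2^\op$-modules, so applying it to an $F_{M_2}$-projective resolution of $M_1$ yields a projective $\Gamma_2^\op$-resolution of $T$. Hence $\pd_{\Gamma_2^\op}T=\pd_{F_{M_2}}M_1\le\gldim_{F_{M_2}}\L\le l$ is automatically finite, and the self-orthogonality of $T$ becomes the vanishing of the relative groups $\Ext^i_{F_{M_2}}(M_1,M_1)$ for $0<i\le l$. By Proposition~\ref{absolut-relative}, if $M_2\perp_l M_1$ then $\Ext^i_{F_{M_2}}(M_1,M_1)\cong\Ext^i_\L(M_1,M_1)$, and the right-hand side vanishes because $M_1$ is maximal $l$-orthogonal; the $\add T$-coresolution follows symmetrically through $F^{M_1}$ and Proposition~\ref{absolut-relative}(c). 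Full faithfulness also gives $\End_{\Gamma_2^\op}(T)\cong\Gamma_1$, so in this regime $T$ is tilting and $\Gamma_1\sim\Gamma_2$. This is precisely the setting of our main theorem, and it is already strictly weaker than Iyama's condition; the content of Proposition~\ref{absolut-relative} is exactly that the discrepancy between $\Ext^i_{F_{M_2}}(M_1,M_1)$ and $\Ext^i_\L(M_1,M_1)$ is the obstruction to $T$ being tilting.

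For an arbitrary pair of maximal $l$-orthogonal modules no mutual orthogonality need hold, the relative self-extensions $\Ext^i_{F_{M_2}}(M_1,M_1)$ may be nonzero, and a single bimodule cannot suffice. Here I would pass to the mutation theory of higher Auslander-Reiten theory: connect $M_1$ to $M_2$ by a finite chain of maximal $l$-orthogonal modules $M_1=N_0,N_1,\dots,N_r=M_2$ in which each consecutive pair is related by an $l$-exchange—an approximation sequence governed by the translate $\DTr$, exploiting the identities $F_M=F^{\DTr M}$ and $F^M=F_{\TrD M}$ of Section~1—chosen so that $N_{j+1}\perp_l N_j$, or more weakly so that $(N_{j+1},N_j)$ meets the relative hypothesis above. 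Each step then makes $\Hom_\L(N_{j+1},N_j)$ a tilting module and gives a derived equivalence $\End_\L(N_j)\sim\End_\L(N_{j+1})$; composing these and using transitivity of derived equivalence yields $\Gamma_1\sim\Gamma_2$.

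The hard part is the combinatorial-homological input of this last step: proving that the graph whose vertices are the maximal $l$-orthogonal modules and whose edges are orthogonality-preserving $l$-exchanges is connected, and that at each exchange the two modules can be arranged to satisfy the relative condition. For $l=1$ this collapses to the classical situation and recovers Iyama's proven case, but for $l>1$ neither the connectivity of the exchange graph nor the requisite vanishing at each step is known, which is exactly why the statement stands as a conjecture. My expectation is that the relative machinery of Section~1 is the right language in which to analyze a single exchange—reducing the full conjecture to the connectivity question—while the connectivity itself is the genuine obstacle and lies beyond the reach of the present methods.
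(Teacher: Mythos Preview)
The statement you are attempting is recorded in the paper as a \emph{conjecture}, not as a theorem: there is no proof of it in the paper to compare against. The paper's actual contribution is Theorem~\ref{iyama}, which characterizes exactly when the bimodule $\Hom_\L(M_2,M_1)$ is (co)tilting in terms of the vanishing of the relative groups $\Ext^i_{F_{M_2}}(M_1,M_1)$ and $\Ext^i_{F^{M_1}}(M_2,M_2)$, together with Proposition~\ref{iyama-us} and the subsequent example showing that this relative condition is strictly weaker than Iyama's orthogonality hypothesis. Your middle paragraph rediscovers this partial result, and your last two paragraphs explicitly concede that the remaining step---connectivity of an exchange graph on maximal $l$-orthogonal modules with edges guaranteeing the relative vanishing---is not established. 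So neither you nor the paper has a proof; what you have written is a strategy outline whose conditional part coincides with the paper's theorem and whose unconditional part is openly incomplete.

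One correction to your middle paragraph. You deduce $\Ext^i_{F_{M_2}}(M_1,M_1)=0$ from the hypothesis $M_2\perp_l M_1$ via Proposition~\ref{absolut-relative}, and then describe the resulting sufficient condition as ``strictly weaker than Iyama's condition.'' That is backwards: Iyama's hypothesis is $M_2\perp_k M_1$ for some $k$ with $k\le l\le 2k+1$, so demanding $M_2\perp_l M_1$ is at least as strong, not weaker. What \emph{is} strictly weaker than Iyama's hypothesis is the relative vanishing condition of Theorem~\ref{iyama} itself; the paper proves this implication directly in Proposition~\ref{iyama-us} and exhibits the strictness in the example that follows. Your route through Proposition~\ref{absolut-relative} therefore recovers only a special case of Theorem~\ref{iyama}, not the full statement.
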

Before we continue, we give a characterization of maximal orthogonal modules that can be
 found in a more general setting in ~\cite[Proposition 2.2.2]{I1}. For convenience, we restate
 it and prove it here in the language of relative homology. We call a $\L$-module \emph{k-selforthogonal} if ${M\perp_k M}$
holds.

\begin{prop}\label{FM2 relative dimension of M1}Let $M$ be in $\mod\L$ and $l$ a positive
integer. The following are equivalent for any integer $k$, such that $0\leq k\leq l$.
\begin{enumerate}
\item[(a)] M is maximal $l$-orthogonal in $\mod\L$,
\item[(b)] M is a generator-cogenerator for $\mod\L$, $M$ is $l$-selforthogonal
and $\pd_{F_M}X\leq l-k$ for any $X$ in ${^{\perp_k}M}$,
\item[(c)] M is a generator-cogenerator for $\mod\L$, $M$ is $l$-selforthogonal
and $\id_{F^M}Y\leq l-k$ for any $Y$ in $M^{\perp_k}$.
\end{enumerate}
\end{prop}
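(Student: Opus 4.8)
The plan is to prove the cycle of implications (a)$\Rightarrow$(b)$\Rightarrow$(a) and then note that (a)$\Leftrightarrow$(c) follows by the symmetric argument using $F^M$ in place of $F_M$ (exploiting $F_M=F^{\DTr M}$ together with the duality $\Hom_\L(M,B)\to\Hom_\L(M,C)\to 0$ exact $\Leftrightarrow$ $\Hom_\L(B,\DTr M)\to\Hom_\L(A,\DTr M)\to 0$ exact). Throughout I will use that maximal $l$-orthogonal modules are generator-cogenerators (stated in the introduction), so that part of (b) and (c) comes for free in the forward direction, and I will use that $\P(F_M)=\P(\L)\cup\add M$.

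For (a)$\Rightarrow$(b): assume $M$ is maximal $l$-orthogonal, so $M^{\perp_l}=\add M={}^{\perp_l}M$; in particular $M\perp_l M$, i.e.\ $M$ is $l$-selforthogonal, and $M$ is a generator-cogenerator. Fix $k$ with $0\leq k\leq l$ and let $X\in{}^{\perp_k}M$; I must show $\pd_{F_M}X\leq l-k$. Take a minimal $F_M$-projective resolution $\cdots\to P_1\to P_0\to X\to 0$ with $F_M$-exact sequences $0\to K_{i}\to P_{i}\to K_{i-1}\to 0$ (where $K_{-1}=X$). The key point is to track the absolute orthogonality: because each $0\to K_i\to P_i\to K_{i-1}\to 0$ is $F_M$-exact, applying $\Hom_\L(-,M)$ gives $\Hom_\L(P_i,M)\to\Hom_\L(K_i,M)\to 0$ exact, and applying $\Hom_\L(M,-)$ plus using $M\perp_l M$ (so $\Ext^j_\L(P_i,M)=0$ for $0<j\leq l$) I get dimension-shifting isomorphisms $\Ext^{j}_\L(K_{i-1},M)\simeq\Ext^{j-1}_\L(K_i,M)$ for $2\leq j\leq l$, and for $j=1$ the $F_M$-exactness makes $\Ext^1_\L(K_{i-1},M)$ the cokernel of $\Hom_\L(P_i,M)\to\Hom_\L(K_i,M)$, which vanishes. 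Hence each syzygy $K_i$ lies in ${}^{\perp_{l}}M$ — wait, more carefully: starting from $X\in{}^{\perp_k}M$, the first $k$ syzygies' orthogonality is already known and the dimension shift shows $K_{k-1}\in{}^{\perp_l}M=\add M$. Thus $K_{k-1}$ is $F_M$-projective, which means the $F_M$-projective resolution stops: $0\to K_{k-1}\to P_{k-1}\to\cdots\to P_0\to X\to 0$, giving $\pd_{F_M}X\leq k-1\leq l-k$? That bound is wrong, so the argument must instead show $K_{l-k}\in\add M$; I expect the correct bookkeeping is that for $X\in{}^{\perp_k}M$ one combines the $F_M$-exactness (which kills $\Ext^1_\L(K_i,M)$ automatically at every step, using only $M\perp_1 M$) with the chain $\Ext^j_\L(X,M)=0$ for $j\leq k$ to conclude $\Ext^j_\L(K_i,M)=0$ for $j\leq k+i$, so that $K_{l-k}\in{}^{\perp_l}M=\add M$, yielding $\pd_{F_M}X\leq l-k$.

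For (b)$\Rightarrow$(a): assume $M$ is a generator-cogenerator, $l$-selforthogonal, and $\pd_{F_M}X\leq l-k$ for all $X\in{}^{\perp_k}M$; I want $M^{\perp_l}=\add M={}^{\perp_l}M$. The inclusion $\add M\subseteq M^{\perp_l}\cap{}^{\perp_l}M$ is exactly $l$-selforthogonality. For $\add M\supseteq{}^{\perp_l}M$: apply the hypothesis with $k=l$, getting $\pd_{F_M}X\leq 0$ for $X\in{}^{\perp_l}M$, i.e.\ $X$ is $F_M$-projective, i.e.\ $X\in\P(\L)\cup\add M=\add M$ (projectives are summands of $M$ since $M$ is a generator). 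For $\add M\supseteq M^{\perp_l}$: this is the direction I expect to be the main obstacle, since (b) only gives information about ${}^{\perp_k}M$, not $M^{\perp_l}$. The resolution is to use the $\DTr$/$\TrD$ symmetry — condition (b) with the roles dualized is condition (c), so one proves (b)$\Leftrightarrow$(c) directly (e.g.\ via $F_M=F^{\DTr M}$ and the fact that $X\in{}^{\perp_k}M$ corresponds under a suitable shift to the $F^{\DTr M}$-picture), and then (c) with $k=l$ gives $\id_{F^M}Y\leq 0$ for $Y\in M^{\perp_l}$, i.e.\ $Y$ is $F^M$-injective, i.e.\ $Y\in\I(\L)\cup\add M=\add M$. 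Combining, $M^{\perp_l}=\add M={}^{\perp_l}M$, which is (a).

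The main obstacle is the self-referential nature of the statement: the relative dimension $\pd_{F_M}X$ is measured against $M$ itself, and the quantifier runs over the not-yet-known class ${}^{\perp_k}M$, so the induction on syzygies must simultaneously (i) use $l$-selforthogonality of $M$ to make $\Ext^{\leq l}_\L(P_i,M)$ vanish, and (ii) use $F_M$-exactness to convert surjectivity of $\Hom_\L(-,M)$-maps into vanishing of the relevant $\Ext^1$. Getting the index bookkeeping exactly right — that $X\in{}^{\perp_k}M$ forces the $(l-k)$-th $F_M$-syzygy into $\add M$ and not merely the $l$-th — is the delicate computational core; everything else (the generator-cogenerator reductions, the $k=l$ specializations, the $\DTr$-duality to pass between (b) and (c)) is formal given the machinery recalled in Section 1.
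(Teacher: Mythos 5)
Your proposal has two genuine gaps, both at the places you yourself flagged as uncertain, and in neither case is the uncertainty resolvable along the route you sketch. In (a)$\Rightarrow$(b) you track the groups $\Ext^j_\L(K_i,M)$ and hope the syzygies become \emph{more} left-orthogonal to $M$ as $i$ grows, so that $K_{l-k}$ lands in ${}^{\perp_l}M=\add M$. This cannot work: applying $\Hom_\L(-,M)$ to $0\to K_i\to P_i\to K_{i-1}\to 0$ and using $\Ext^j_\L(P_i,M)=0$ for $0<j\leq l$ gives $\Ext^j_\L(K_i,M)\simeq\Ext^{j+1}_\L(K_{i-1},M)\simeq\cdots\simeq\Ext^{j+i+1}_\L(X,M)$, so the vanishing range inherited from $X\in{}^{\perp_k}M$ \emph{shrinks} with $i$; your claimed ``$\Ext^j_\L(K_i,M)=0$ for $j\leq k+i$'' is false. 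You also misuse $F_M$-exactness: an $F_M$-exact sequence is by definition one on which $\Hom_\L(M,-)$ stays exact, so it yields surjectivity of $\Hom_\L(M,P_i)\to\Hom_\L(M,K_{i-1})$, not of $\Hom_\L(P_i,M)\to\Hom_\L(K_i,M)$ (the latter is $F^M$-exactness). The paper's proof has the opposite variance: it shows $K_{l-k-1}\in M^{\perp_l}=\add M$ by computing $\Ext^j_\L(M,K_{l-k-1})$. The $F_M$-exactness of each syzygy sequence kills $\Ext^1_\L(M,K_i)$ for every $i$, dimension shifting then gives $\Ext^j_\L(M,K_{l-k-1})\simeq\Ext^1_\L(M,K_{l-k-j})=0$ for $2\leq j\leq l-k$, and for $j=l-k+s$ with $1\leq s\leq k$ one lands on $\Ext^s_\L(M,X)$ --- this is where the orthogonality hypothesis on $X$ enters (note that what is needed here is right-orthogonality $\Ext^s_\L(M,X)=0$, which is worth checking carefully against how the class of $X$'s is labelled in (b)).

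In (b)$\Rightarrow$(a) you ``apply the hypothesis with $k=l$.'' The proposition asserts the equivalence for each fixed $k$, and (b) for a given $k<l$ does not imply (b) for $k=l$: it gives only the weaker bound $\pd_{F_M}X\leq l-k$ on the larger class ${}^{\perp_k}M$, not $\pd_{F_M}X\leq 0$ on ${}^{\perp_l}M$. So that specialization is not available, and the inclusion ${}^{\perp_l}M\subseteq\add M$ still requires an argument. The paper's route: take $X\in{}^{\perp_l}M\subseteq{}^{\perp_k}M$, use $\pd_{F_M}X\leq l-k$ to get a finite $F_M$-projective resolution ending in $M_{l-k}\in\add M$, and then dimension-shift $\Ext^1_\L(X,K_0)\simeq\cdots\simeq\Ext^{l-k}_\L(X,M_{l-k})=0$ (using $X\in{}^{\perp_l}M$ against the $\add M$-terms) to split $0\to K_0\to M_0\to X\to 0$, whence $X\in\add M$. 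Finally, your plan for the remaining inclusion $M^{\perp_l}\subseteq\add M$ --- ``prove (b)$\Leftrightarrow$(c) directly via $F_M=F^{\DTr M}$'' --- is a pointer rather than an argument: the translation between the left- and right-orthogonal classes under $\DTr$ is exactly the nontrivial content, and you do not supply it.
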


\begin{proof}
(a)$\Rightarrow$(b) Let
\[\cdots \to M_{l-k}\extto{f_{l-k}} M_{l-k-1}\to \cdots \to M_1\extto{f_1} M_0
\extto{f_0} C\to 0\]be a minimal $F_M$-projective resolution of $X$. Set $K_{-1}=X$ and
$K_{i}=\Ker f_i$, for $i=0,1,2,\ldots$. We want to show that $K_{l-k-1}$ is in $\add M$.
In order to do this, we show that $\Ext_{\L}^{j}(M,K_{l-k-1})=(0),$ for all
$j=1,2,\ldots,l$. For all $i$, applying the functor $\Hom_{\L}(M,-)$ to the short exact
sequence \[ 0\to K_{i+1}\to M_{i+1}\to K_{i}\to 0\] we get the long exact sequence
\[\Hom_{\L}(M,M_{i+1})\to \Hom_{\L}(M,K_{i})\to
\Ext_{\L}^{1}(M,K_{i+1})\to \Ext_{\L}^{1}(M,M_{i+1}).\] Since $M$ is $l$-selforthogonal
with $l\geq 1$ and $M_{i+1}$ is in $\add M$, we have that $\Ext_{\L}^{1}(M,M_{i+1})=(0)$.
Moreover, since the short exact sequence $0\to K_{i+1}\to M_{i+1}\to K_{i}\to 0$ is
$F_{M}$-exact, the map $\Hom_{\L}(M,M_{i+1})\to \Hom_{\L}(M,K_{i})$ is an epimorphism.
Hence $\Ext_{\L}^{1}(M,K_{i+1})=(0)$. From the same short exact sequence we also get the
long exact sequences
\[\Ext_{\L}^{j-1}(M,M_{i+1})\to \Ext_{\L}^{j-1}(M,K_{i})\to
\Ext_{\L}^{j}(M,K_{i+1})\to \Ext_{\L}^{j}(M,M_{i+1}).\] For $2\leq j \leq l$, since $M$
is $l$-selforthogonal and $M_{i+1}$ is in $\add M$, we have that
$\Ext_{\L}^{j-1}(M,M_{i+1})=(0)$ and $\Ext_{\L}^{j}(M,M_{i+1})=(0)$, which implies that
\[\Ext_{\L}^{j-1}(M,K_{i})\simeq \Ext_{\L}^{j}(M,K_{i+1}), \ \ 2\leq j\leq l\]
Now, using the above, we can compute the groups
$\Ext_{\L}^{j}(M,K_{l-k-1})$ as follows:\\
for  $2\leq j\leq l-k$ we have
\[\Ext_{\L}^{j}(M,K_{l-k-1})\simeq\Ext_{\L}^{1}(M,K_{l-k-j})=(0)\]
and for $j=l-k+s, 1\leq s\leq k$ we have
\[\Ext_{\L}^{l-k+s}(M,K_{l-k-1})\simeq\Ext_{\L}^{s}(M,X)=(0).\]

So we have
\[\Ext_{\L}^{j}(M,K_{l-k-1})\simeq\Ext_{\L}^{1}(M,K_{l-k-j})=(0), \ \ 2\leq j\leq l\]
 and since $M$ is maximal $l$-orthogonal this
implies that $K_{l-k-1}$ is in $\add M$. Hence $\pd_{F_{M}}X\leq l-k$.

(b)$\Rightarrow$(a) Let $X$ be in $\mod\L$ such that $\Ext_\L^i(X,M)=(0)$, for $0<i\leq
l$. We will show that $X$ is then in $\add M$. Note that $k\leq l$, so
$\Ext_\L^i(X,M)=(0)$, for $0<i\leq k$ or equivalently $X$ is in ${^{\perp_k}M}$, hence by
assumption $\pd_{F_M}X\leq l-k$. If $k=0$, this will imply that $X$ is $F_M$-projective,
 hence $X$ is in $\add M$. Assume that $k>0$ and let
\[0\to M_{l-k}\extto{f_{l-k}} M_{l-k-1}\to\cdots\to M_1\extto{f_1}M_0\extto{f_0} X\to 0\]
be a minimal $F_M$-projective resolution of $X$. Set $K_{-1}=X$, $K_{i}=\Ker f_i$, for
$i=0,1,\ldots,l-k-2$ and $K_{l-k-1}=M_{l-k}$. Then, for any $i$, applying the functor
$\Hom_\L(X,-)$ to the short exact sequence
\[0\to K_i\to M_i\to K_{i-1}\to 0\]
we get long exact sequences
\[\Ext_\L^j(X,M_i)\to\Ext_\L^j(X,K_{i-1})\to\Ext_\L^{j+1}(X,K_i)\to\Ext_\L^{j+1}(X,M_i).\]
Since $M$ is $l$-selforthogonal and $M_i$ is in $\add M$ for all $i$, we have that the
first and last term of the above sequence vanish for all $j=1,2,\ldots,l-1$. So we have
\[\Ext_\L^j(X,K_{i-1})\simeq\Ext_\L^{j+1}(X,K_i)\]
for all $i$ and for $j=1,2,\ldots,l-1$. But then we have
\[\Ext_\L^1(X,K_0)\simeq\Ext_\L^2(X,K_1)\simeq\cdots\simeq\Ext_\L^{l-k}(X,K_{l-k-1})=
\Ext_\L^{l-k}(X,M_{l-k})=(0).\] This implies that the short exact sequence $0\to K_0\to
M_0\to X\to 0$ splits and hence $X$ is in $\add M$. So $M$ is maximal $l$-orthogonal.

The proof of (a)$\Leftrightarrow$(c) is symmetric.
\end{proof}
Setting $k=0$ in the above proposition, and using Lemma \ref{gldim}, we have the
following nice characterization of maximal $l$-orthogonal modules.
\begin{cor}\label{maximal orthogonal}

Let $M$ be in $\mod\L$ and $l$ a positive integer. The following are equivalent:
\begin{enumerate}
\item[(a)] $M$ is maximal $l$-orthogonal,
\item[(b)] $M$ is a generator-cogenerator for $\mod\L$, $M$ is $l$-selforthogonal \\and
$\gldim\End_\L(M)\leq l+2$.
\end{enumerate}
\end{cor}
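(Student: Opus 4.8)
The plan is to deduce Corollary \ref{maximal orthogonal} directly from Proposition \ref{FM2 relative dimension of M1} with $k=0$ together with the already-established equivalence (a)$\Leftrightarrow$(b) of Lemma \ref{gldim}. Since $k=0$ imposes no orthogonality condition, the classes ${^{\perp_0}M}$ and $M^{\perp_0}$ are all of $\mod\L$, so part (b) of Proposition \ref{FM2 relative dimension of M1} becomes: $M$ is a generator-cogenerator, $M$ is $l$-selforthogonal, and $\pd_{F_M}X\leq l$ for every $X\in\mod\L$. By the definition of the relative global dimension this last condition says precisely that $\gldim_{F_M}\L\leq l$.

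The remaining step is to translate $\gldim_{F_M}\L\leq l$ into $\gldim\End_\L(M)\leq l+2$. First I would note that the hypotheses on both sides include that $M$ is a generator-cogenerator; under that hypothesis Lemma \ref{gldim} applies and gives the equivalence of $\gldim\End_\L(M)\leq l+2$ with $\gldim_{F_M}\L\leq l$. Thus condition (b) of the corollary---$M$ is a generator-cogenerator, $l$-selforthogonal, and $\gldim\End_\L(M)\leq l+2$---is equivalent, via Lemma \ref{gldim}, to: $M$ is a generator-cogenerator, $l$-selforthogonal, and $\gldim_{F_M}\L\leq l$, which is exactly condition (b) of Proposition \ref{FM2 relative dimension of M1} at $k=0$. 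Chaining the two equivalences yields (a)$\Leftrightarrow$(b) of the corollary.

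The one point requiring a little care---and the only possible obstacle---is making sure the hypothesis of Lemma \ref{gldim}, namely that $M$ is a generator-cogenerator, is legitimately available when we invoke it inside the proof of the corollary. In the direction (a)$\Rightarrow$(b) of the corollary, Proposition \ref{FM2 relative dimension of M1} already tells us $M$ is a generator-cogenerator, so Lemma \ref{gldim} may be applied to convert $\gldim_{F_M}\L\leq l$ into the statement about $\gldim\End_\L(M)$. In the direction (b)$\Rightarrow$(a), the generator-cogenerator hypothesis is explicitly part of (b), so again Lemma \ref{gldim} applies and converts $\gldim\End_\L(M)\leq l+2$ back to $\gldim_{F_M}\L\leq l$, after which Proposition \ref{FM2 relative dimension of M1} closes the argument.

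So the proof is essentially a two-line citation: set $k=0$ in Proposition \ref{FM2 relative dimension of M1}, observe that ${^{\perp_0}M}=\mod\L$ so that the relative projective dimension condition becomes $\gldim_{F_M}\L\leq l$, and then apply the equivalence (a)$\Leftrightarrow$(b) of Lemma \ref{gldim} (valid since $M$ is a generator-cogenerator) to replace $\gldim_{F_M}\L\leq l$ by $\gldim\End_\L(M)\leq l+2$. No genuinely new computation is needed; the work has all been done in the preceding proposition and lemma.
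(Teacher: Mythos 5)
Your proof is correct and is exactly the paper's argument: the corollary is obtained by setting $k=0$ in Proposition \ref{FM2 relative dimension of M1}, noting that ${^{\perp_0}M}=\mod\L$ turns the relative projective dimension condition into $\gldim_{F_M}\L\leq l$, and then applying Proposition \ref{gldim} to translate this into $\gldim\End_\L(M)\leq l+2$. Your extra care about when the generator-cogenerator hypothesis is available is a welcome clarification of a point the paper leaves implicit.
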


Before we state and prove our main theorem we recall some definitions. A $\L$-module $M$
is called \emph{cotilting}, if it has the following properties: (1) $\Ext_\L^i(M,M)=(0)$,
for $i>0$, (2) $\id_\L M<\infty$ and (3) $\I(\L)\subseteq \widehat{\add M}$. Similarly,
if $F=F_M$ is a sub-bifunctor of $\Ext_\L^1(-,-)$, a $\L$-module $M$ is called
\emph{$F$-cotilting} if : (1) $\Ext_F^i(M,M)=(0)$, for $i>0$, (2) $\id_F M < \infty$ and
(3) $\I(F)\subseteq \widehat{\add_F M}$, where $\widehat{\add_F M}$ denotes the full
subcategory of $\mod\L$ consisting of all modules that have a finite resolution in $\add
M$ which is in addition $F$-exact. The notions of \emph{tilting} and \emph{F-tilting}
modules are defined dually.

For an artin algebra $\L$ it is known that when $\gldim \L<\infty$, a $\L$-module $M$ is
tilting if and only if $M$ it is cotilting. The proof is based on the one to one
correspondences between equivalence classes of tilting or cotilting $\L$-modules and
certain subcategories of $\mod\L$ (see for example ~\cite[Theorem 2.1]{S}). A relative
version of these correspondences is proved in ~\cite{AS2} and using these one can prove
the following:
\begin{lem}\label{tilt-cotilt}
Let $F$ be a sub-bifunctor of $\Ext^1_\L(-,-)$ with enough projectives and  $M$ a
$\L$-module. Assume that $\gldim_{F}\L<\infty$. Then $M$ is  $F$-tilting  if and only if
$M$ is $F$-cotilting.
\end{lem}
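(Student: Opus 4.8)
\textbf{Proof plan for Lemma \ref{tilt-cotilt}.}

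The plan is to reduce the $F$-relative statement to the classical one by passing to the module category of the endomorphism ring of an $F$-projective generator, where relative homology becomes absolute homology. More precisely, since $F=F_M$ has enough projectives and $\P(F)=\add(\L\oplus M)$ (recall $M$ here should be taken to contain $\L$ as a summand, or else replace $M$ by $\L\oplus M$ without changing $F_M$), the functor $G=\Hom_\L(\L\oplus M,-)$ induces an equivalence between the exact category $(\mod\L, F)$ and a suitable subcategory of $\mod\Gamma$, where $\Gamma=\End_\L(\L\oplus M)^\op$. Under this correspondence $F$-exact sequences go to exact sequences, $F$-projective resolutions go to ordinary projective resolutions, and the hypothesis $\gldim_F\L<\infty$ translates (via Proposition \ref{gldim}, or rather its proof) into $\gldim\Gamma<\infty$. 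One then checks that $G$ sends $F$-tilting $\L$-modules to tilting $\Gamma$-modules and $F$-cotilting $\L$-modules to cotilting $\Gamma$-modules, so the equivalence of $F$-tilting and $F$-cotilting follows from the classical finite-global-dimension fact that tilting equals cotilting, cited as \cite[Theorem 2.1]{S}.

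The steps, in order, would be: (1) normalize so that $\L\in\add M$, hence $\P(F_M)=\add M$; (2) recall from \cite{AS2} the relative one-to-one correspondences between (equivalence classes of) $F$-tilting modules and certain resolving subcategories of $\mod\L$, and dually for $F$-cotilting modules and coresolving subcategories; (3) observe that when $\gldim_F\L=n<\infty$, the subcategory of modules of finite $F$-projective dimension is all of $\mod\L$, and likewise for finite $F$-injective dimension, so the relative versions of Bongartz-type completion and of the characterizations of $\widehat{\add_F M}$ behave exactly as in the absolute case; (4) run the classical argument (as in \cite{S}) verbatim inside the relative framework: an $F$-tilting module $M$ determines the subcategory $\X=\{X : \Ext^i_F(M,X)=0,\ i>0\}$, which is coresolving and contains $\I(F)$, and conversely; finiteness of $\gldim_F\L$ forces the associated cotilting module to coincide (up to additive closure) with $M$.

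The main obstacle I expect is purely bookkeeping: verifying that every ingredient of the classical proof — Bongartz's lemma, the fact that a partial tilting module of finite projective dimension embeds in a tilting module, the counting of the number of nonisomorphic indecomposable summands, and the duality between the tilting and cotilting correspondences — has an exact-category analogue valid for $F=F_M$. All of these are available in \cite{AS2}, but one must be careful that the relative Ext groups $\Ext^i_F$ used there agree with the derived functors defined in the present paper (which was noted above), and that "$\widehat{\add_F M}$" as defined here matches the subcategory appearing in the relative correspondence. Once the dictionary is set up, the argument is a formal transcription of \cite[Theorem 2.1]{S}; the finiteness hypothesis $\gldim_F\L<\infty$ is exactly what is needed to guarantee that the resolving subcategory attached to an $F$-tilting module and the coresolving one attached to an $F$-cotilting module are interchanged, yielding the claimed equivalence.
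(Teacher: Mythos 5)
Your proposal is correct in substance, but your headline strategy differs from the paper's. The paper gives essentially no argument beyond your steps (2)--(4): it observes that the classical fact ``tilting $=$ cotilting when $\gldim\L<\infty$'' is proved via the bijections of \cite[Theorem 2.1]{S} between tilting (resp.\ cotilting) modules and certain resolving (resp.\ coresolving) subcategories, and that \cite{AS2} establishes the relative versions of these bijections, so the same argument transcribes verbatim to the $F$-relative setting. Your first paragraph instead proposes reducing to the absolute case over $\Gamma=\End_\L(N)^{\op}$, where $N$ is an additive generator of $\P(F)$; this is a legitimate alternative and arguably more self-contained, since it lets you quote the classical theorem rather than its relative analogue, at the cost of verifying that $\Hom_\L(N,-)$ identifies $\Ext^i_F(A,B)$ with $\Ext^i_\Gamma$ of the images, sends $F$-(co)tilting modules to (co)tilting $\Gamma$-modules and conversely, and that $\gldim_F\L<\infty$ forces $\gldim\Gamma<\infty$ (the last point follows from the two-step syzygy argument in the proof of Proposition~\ref{gldim}, which only needs $N$ to be a generator, not a generator-cogenerator). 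Two small cautions: the $M$ of the lemma is the candidate (co)tilting module, not the module defining $F$, so you should write $F=F_N$ with $\L\in\add N$ and keep $N$ distinct from $M$ throughout; and in the converse direction of your reduction you must check that coresolutions of projective $\Gamma$-modules by $\add\Hom_\L(N,M)$ lift back to $F$-exact coresolutions in $\mod\L$, which uses full faithfulness of $\Hom_\L(N,-)$ together with the description of its essential image. Neither point is an obstruction, only bookkeeping you would need to carry out.
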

We are now in position to give the main theorem of this paper.
\begin{thm}\label{iyama}
Let $\L$ be an artin algebra with $M_1$ and $M_2$ two generator-cogenerators for
$\mod\L$. Suppose that there exists some positive integer $l$ such that
$\gldim\End_\L(M_{i})\leq l+2$ for $i=1,2$. The following are equivalent:
\begin{enumerate}
\item[(a)]
$\Ext^i_{F^{M_1}}(M_2,M_2)=(0)$ and $\Ext^i_{F_{M_2}}(M_1,M_1)=(0)$ for $0<i\leq l$,
\item[(b)] $M_2$ is an $F^{M_1}$-cotilting module,
\item[(c)] $M_1$ is an $F_{M_2}$-cotilting module,
\item[(d)] $\Hom_\L(M_2,M_1)$ is a cotilting
$\End_\L(M_2)^\op$-$\End_\L(M_1)$-bimodule.
\end{enumerate}
\end{thm}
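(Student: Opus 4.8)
The plan is to establish the chain of equivalences (a) $\Leftrightarrow$ (b) $\Leftrightarrow$ (c) and then (b)+(c) $\Leftrightarrow$ (d), exploiting throughout the hypothesis that $\gldim_{F_{M_2}}\L\leq l$ and $\gldim_{F^{M_1}}\L\leq l$, which we get for free from Proposition~\ref{gldim} applied to the two generator-cogenerators.

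First I would do (a) $\Rightarrow$ (b). Since $M_1$ is a cogenerator, $\I(F^{M_1})=\I(\L)\cup\add M_1$ by the discussion in Section~1; since $\L$ has finite $F^{M_1}$-global dimension, every injective $\L$-module has a finite $F^{M_1}$-projective resolution, and as $F^{M_1}=F_{\TrD M_1}$ has enough projectives with $\P(F^{M_1})=\P(\L)\cup\add\TrD M_1$, one checks that in fact $M_2$ being a generator forces $\P(F^{M_1})\subseteq\widehat{\add_{F^{M_1}}M_2}$ — here one uses that projectives lie in $\add M_2$ and builds $F^{M_1}$-exact resolutions of the modules $\TrD M_1$ by iterated $\add M_2$-approximations, which terminate by the finite-global-dimension hypothesis. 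Condition (1) of $F^{M_1}$-cotilting is exactly the first half of (a); condition (2) $\id_{F^{M_1}}M_2<\infty$ is automatic from $\gldim_{F^{M_1}}\L<\infty$; and condition (3) $\I(F^{M_1})\subseteq\widehat{\add_{F^{M_1}}M_2}$ follows by resolving injectives through projectives. For (b) $\Rightarrow$ (a), observe that $\Ext^i_{F^{M_1}}(M_2,M_2)=(0)$ is literally condition (1) of $F^{M_1}$-cotilting, and the vanishing $\Ext^i_{F_{M_2}}(M_1,M_1)=(0)$ for $0<i\le l$ can be read off by a dimension-shifting argument using Lemma~\ref{tilt-cotilt}: an $F^{M_1}$-cotilting module over a ring of finite relative global dimension is also $F^{M_1}$-tilting, and one transfers self-orthogonality from one side to the other via the standard manipulation with $\Hom_\L(M_2,-)$ and $\Hom_\L(-,M_1)$ as in the proof of Proposition~\ref{gldim}. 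The equivalence (a) $\Leftrightarrow$ (c) is entirely symmetric, swapping the roles of $M_1,M_2$ and of $F_{M_2},F^{M_1}$.

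Next, (b)+(c) $\Leftrightarrow$ (d). The functor $\Hom_\L(M_2,-)$ identifies $\add M_2$ with the projective $\End_\L(M_2)^\op$-modules and, being full and faithful on $\mod\L$ when $M_2$ is a generator, carries an $F_{M_2}$-projective resolution of $M_1$ to a genuine projective resolution of $\Hom_\L(M_2,M_1)$ over $\End_\L(M_2)^\op$; dually $\Hom_\L(-,M_1)$ (using that $M_1$ is a cogenerator) carries an $F^{M_1}$-projective resolution of $M_2$ to a projective resolution of $\Hom_\L(M_2,M_1)$ over $\End_\L(M_1)$. Under these translations, the three defining conditions of an $F_{M_2}$-cotilting module $M_1$ become precisely the three conditions for $\Hom_\L(M_2,M_1)$ to be a cotilting $\End_\L(M_1)$-module: self-orthogonality goes to self-orthogonality, finite $F_{M_2}$-injective dimension goes to finite injective dimension (here one must relate $\id$ of the bimodule as a right module over $\End_\L(M_1)$ to $\id_{F^{M_1}}M_2$, which is where condition (c) on the other side is used), and the relative coresolution condition goes to $\I(\End_\L(M_1))\subseteq\widehat{\add\Hom_\L(M_2,M_1)}$. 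Symmetrically, (b) translates to the cotilting conditions on the left $\End_\L(M_2)^\op$-module structure. So (d) holds exactly when both (b) and (c) hold, and since we have already shown (b) $\Leftrightarrow$ (c), this closes the loop.

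I expect the main obstacle to be the careful bookkeeping in the translation step: matching the \emph{relative} notion $\widehat{\add_F M}$ (finite $F$-exact resolutions in $\add M$) with the \emph{absolute} notion $\widehat{\add T}$ over the endomorphism ring, and in particular checking that $F$-exactness of a resolution in $\add M_2$ (resp. $\add M_1$) corresponds exactly to exactness of the image complex under $\Hom_\L(M_2,-)$ (resp. $\Hom_\L(-,M_1)$) — this rests on the defining property of $F_{M_2}$ and $F^{M_1}$ but needs to be spelled out so that injective dimensions and the coresolution conditions transfer correctly on both sides simultaneously. A secondary delicate point is verifying that $\id_{F^{M_1}}M_2<\infty$ really does control the injective dimension of $\Hom_\L(M_2,M_1)$ over $\End_\L(M_1)$ and not merely its projective dimension; this is the place where one genuinely needs Lemma~\ref{tilt-cotilt} to pass between the tilting and cotilting descriptions, and where the two self-orthogonality hypotheses in (a) are both indispensable rather than one implying the other trivially.
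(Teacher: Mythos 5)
Your overall architecture (chaining (a)$\Leftrightarrow$(b)$\Leftrightarrow$(c) and then passing to (d), with Proposition~\ref{gldim} supplying $\gldim_{F^{M_1}}\L\leq l$ and $\gldim_{F_{M_2}}\L\leq l$, and Lemma~\ref{tilt-cotilt} mediating between tilting and cotilting) matches the paper's. But your argument for (a)$\Rightarrow$(b) has a genuine gap at condition (3). Since $M_2$ is a cogenerator, $\I(\L)\subseteq\add M_2$, so $\I(F^{M_1})=\I(\L)\cup\add M_1\subseteq\widehat{\add_{F^{M_1}}M_2}$ reduces to the single assertion that $M_1$ admits a \emph{finite, $F^{M_1}$-exact} resolution with terms in $\add M_2$. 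The finite relative global dimension hands you a finite $F_{M_2}$-projective resolution of $M_1$ with terms in $\add M_2$ (that is what iterated right $\add M_2$-approximations produce, and what terminates by the $\gldim$ bound), but such a resolution is $F_{M_2}$-exact by construction, not $F^{M_1}$-exact. The $F^{M_1}$-exactness is exactly where the second half of hypothesis (a), $\Ext^i_{F_{M_2}}(M_1,M_1)=(0)$ for $0<i\leq l$, must be used: apply $\Hom_\L(-,M_1)$ to the resolution and observe that the vanishing of these relative $\Ext$-groups forces the resulting complex to be exact, i.e.\ each syzygy inclusion induces a surjection on $\Hom_\L(-,M_1)$, which is the definition of $F^{M_1}$-exactness. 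Your sketch never invokes this hypothesis for this step; instead you assert that $M_2$ being a generator forces $\P(F^{M_1})\subseteq\widehat{\add_{F^{M_1}}M_2}$ and that condition (3) ``follows by resolving injectives through projectives,'' conflating $\P(F^{M_1})=\P(\L)\cup\add\TrD M_1$ (irrelevant here) with $\I(F^{M_1})$. As written, your (a)$\Rightarrow$(b) would use only the first half of (a), which cannot be right: the two halves of (a) are symmetric and independent, and dropping one would make the theorem false.

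A secondary concern is your route to (d). The paper gets (b)$\Rightarrow$(d) by converting $F^{M_1}$-cotilting to $F^{M_1}$-tilting via Lemma~\ref{tilt-cotilt} and then citing the relative tilting correspondence of Auslander--Solberg, which directly yields that $\Hom_\L(M_2,M_1)$ is cotilting over $\End_\L(M_2)^\op$ because $\I(F^{M_1})=\add M_1$; for (d)$\Rightarrow$(a) it applies $\Hom_\L(-,M_1)$ to a minimal $F^{M_1}$-injective resolution of $M_2$ to get a projective resolution of $\Hom_\L(M_2,M_1)$ over $\End_\L(M_1)$ and identifies $\Ext^i_{F^{M_1}}(M_2,M_2)$ with $\Ext^i_{\End_\L(M_1)}(\Hom_\L(M_2,M_1),\Hom_\L(M_2,M_1))$. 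Your plan of translating the three cotilting axioms term by term is plausible, but note that, as you half-observe, the $F^{M_1}$-injective resolution of $M_2$ transports to a \emph{projective} resolution of $\Hom_\L(M_2,M_1)$ over $\End_\L(M_1)$, so what you control directly is its projective dimension and its tilting property; passing to the cotilting statement again requires the finite global dimension of the endomorphism ring together with the tilting--cotilting correspondence, not just bookkeeping. You flag this but do not resolve it, so this step too needs to be filled in before the argument is complete.
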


\begin{proof}
(a)$\Rightarrow$(b) First, since $\gldim \End_\L(M_{1})\leq l+2$, applying Lemma
\ref{gldim}, we see that $\id_{F^{M_1}} M_2\leq l$. Also, by assumption, we have
$\Ext^i_{F^{M_1}}(M_2,M_2)=(0)$ for $0<i\leq l$ and since $\id_{F^{M_1}} M_2\leq l$ we
get $\Ext^i_{F^{M_1}}(M_2,M_2)=(0)$ for $i>0$. It remains to show that $\I(F^{M_1})$ is
contained in $\widehat{\add_{F^{M_1}} M_2}$. To do this, we consider a minimal
$F_{M_2}$-projective resolution of $M_1$
\[\cdots\to P_l\to P_{l-1}\extto{f_{l-1}} \cdots\to P_1\to P_0\to
M_1\to 0\] and we set $K_{l-1}=\Ker f_{l-1}$. Since $\gldim \End_\L(M_{2})\leq l+2$,
applying Lemma \ref{gldim} we see that $K_{l-1}$ is $F_{M_2}$-projective, so it is in
$\add M_2$. Applying the functor $\Hom_\L(-,M_1)$ to this sequence we get a long exact
sequence
\begin{multline}
0\to \Hom_\L(M_1,M_1)\to \Hom_\L(P_0,M_1)\to \Hom_\L(P_1,M_1)\to\cdots\to \notag \\
\Hom_\L(P_{l-1},M_1)\to \Hom_\L(K_{l-1},M_1).
\end{multline}
But, by assumption, $\Ext^i_{F_{M_2}}(M_1,M_1)=(0)$ for $0<i\leq l$, so the above
sequence is exact and the map $\Hom_\L(P_{l-1},M_1)\to \Hom_\L(K_{l-1},M_1)$ is an
epimorphism, which implies that the sequence
\[0\to K_{l-1}\to P_{l-1}\extto{f_{l-1}}\cdots\to P_1\to P_0\to
M_1\to 0\] is $F^{M_1}$-exact. Thus $M_2$ is an $F^{M_1}$-cotilting
module.\\

(b)$\Rightarrow$(a) Since $M_2$ is an $F^{M_1}$-cotilting module,
$\Ext^i_{F^{M_1}}(M_2,M_2)=(0)$ for $i>0$. Moreover, $\I(F^{M_1})$ is contained in
$\widehat{\add_{F^{M_1}} M_2}$, so there exists an $F^{M_1}$-exact sequence
\[(\eta)\colon 0\to P_n\to P_{n-1}\extto{f_{n-1}}\cdots\to P_1\extto{f_1} P_0\extto{f_0} M_1\to 0\]
with $P_i$ in $\add M_2$, for all $i$. We will show that $(\eta)$ is also
$F_{M_2}$-exact. We set $K_i=\Ker f_i$ for $i=1,\ldots,n-2$, $K_0=M_1$ and $K_{n-1}=P_n$.
Then, for any $i$, applying the functor $\Hom_\L(M_2,-)$ to the short exact sequence
\[(\eta_i)\colon 0\to K_i \to P_i \to K_{i-1}\] we get long exact sequences
\[\Ext_{F^{M_1}}^j(M_2,P_i)\to \Ext_{F^{M_1}}^j(M_2,K_{i-1})\to
\Ext_{F^{M_1}}^{j+1}(M_2,K_i)\to \Ext_{F^{M_1}}^{j+1}(M_2,P_i)\] for $j>0$. But since
$P_i$ is in $\add M_2$, for all i, the first and the last term of these sequences are
zero, hence the two middle terms are isomorphic. So we have
\begin{multline}
\Ext_{F^{M_1}}^1(M_2,K_i)\simeq\Ext_{F^{M_1}}^2(M_2,K_{i+1})\simeq
\cdots\simeq\Ext_{F^{M_1}}^{n-i}(M_2,K_{n-1})=\notag \\
\Ext_{F^{M_1}}^{n-i}(M_2,P_n)=(0).
\end{multline}
for $i=0,1,\ldots,n-1$. But by applying the functor $\Hom_\L(M_2,-)$ to the short exact
sequences $(\eta_i)$, we also get the long exact sequences
\[\Hom_\L(M_2,P_i)\to\Hom_\L(M_2,K_{i-1})\to\Ext_{F^{M_1}}^1(M_2,K_i)\]
and since $\Ext_{F^{M_1}}^1(M_2,K_i)=(0)$ for all $i$, we have that the map
$\Hom_\L(M_2,P_i)\to\Hom_\L(M_2,K_{i-1})$ is an epimorphism for all $i$, which implies
that $(\eta)$ is $F_{M_2}$-exact. We now apply the functor $\Hom_\L(-,M_1)$ to $(\eta)$
and we get the complex
\[0\to\Hom_\L(M_1,M_1)\to\Hom_\L(P_0,M_1)\to\Hom_\L(P_1,M_1)\to\cdots\to\Hom_\L(P_n,M_1)\to 0\]
Since $(\eta)$ is $F_{M_2}$-exact, the $i-th$-homology of the above complex is
$\Ext_{F_{M_2}}^i(M_1,M_1)$. But since $(\eta)$ is $F^{M_1}$-exact, the above complex is
acyclic. Hence $\Ext_{F_{M_2}}^i(M_1,M_1)=(0)$, for $i>0$, which completes the proof.

(a)$\Leftrightarrow$(c)
The proof is symmetric to the proof of (a)$\Leftrightarrow$(b)\\

(b)$\Rightarrow$(d) Since $\gldim \End_\L(M_{1})\leq l+2$, by Lemma \ref{gldim}, we get
$\gldim_{F^{M_1}}\L\leq l$ and then, by Lemma \ref{tilt-cotilt}, we have that $M_2$ is an
$F^{M_1}$-tilting $\L$-module. But then, since $\I(F^{M_1})=\add M_1$, the module
$\Hom_\L(M_2,M_1)$ is a cotilting $\End_\L(M_2)^\op$-module, as shown in \cite{AS2}. This
is equivalent to $\Hom_\L(M_2,M_1)$ being a cotilting
$\End_\L(M_2)^\op$-$\End_\L(M_1)$-bimodule.\\

(d)$\Rightarrow$(a) We first show that $\Ext^i_{F^{M_1}}(M_2,M_2)=(0)$ for $0<i\leq l$.
Recall that by Lemma \ref{gldim} we have that $\gldim_{F^{M_1}}\L\leq l$ and let
\[0\to M_2\to I_0\to I_1\to\cdots\to I_l\to 0\] be a minimal
$F^{M_1}$-injective resolution of $M_2$. Then by applying the functor \\$\Hom_\L(-,M_1)$
we get a minimal projective resolution of the $\End_\L(M_1)$-module $\Hom_\L(M_2,M_1)$
\begin{multline}
0\to \Hom_\L(I_l,M_1)\to \Hom_\L(I_{l-1},M_1)\to \cdots\to\Hom_\L(I_1,M_1)\to \notag \\
\Hom_\L(I_0,M_1)\to \Hom_\L(M_2,M_1)\to 0
\end{multline} Applying the functor $\Hom_{\End_\L(M_1)}(-,\Hom_\L(M_2,M_1))$
to the last sequence we get the following commutative exact diagram where the notation
has been simplified
\[\xymatrix@C=7pt{
0\ar[r]&((M_2,M_1),(M_2,M_1))\ar[d]^-{\wr}\ar[r]&((I_0,M_1),(M_2,M_1))\ar[d]^-{\wr}\ar[r]&
\cdots\ar[r]&((I_l,M_1),(M_2,M_1))\ar[d]^-{\wr}\ar[r]&0\\
0\ar[r]&(M_2,M_2)\ar[r]&(M_2,I_0)\ar[r]&\cdots\ar[r]&(M_2,I_l)\ar[r]&0}\] From the above
diagram we see that
\[\Ext^i_{F^{M_1}}(M_2,M_2)\simeq
\Ext^i_{\End_\L(M_1)}( \Hom_\L(M_2,M_1),\Hom_\L(M_2,M_1) )=(0),\]
for $i>0$, since $\Hom_\L(M_2,M_1)$ is a cotilting $\End_\L(M_1)$-module.\\
Symmetrically, starting with a minimal $F_{M_2}$-projective resolution of $M_1$ and
applying the functor $\Hom_\L(M_2,-)$ we can show that
\[\Ext^i_{F_{M_2}}(M_1,M_1)\simeq
\Ext^i_{\End_\L(M_2)^{\op}}(\Hom_\L(M_2,M_1),\Hom_\L(M_2,M_1)) =(0)\] for $i>0$. Here we
are using that $\Hom_\L(M_2,M_1)$ is a cotilting $\End_\L(M_2)^{\op}$-module.
\end{proof}
The following easy consequence of the above theorem generalizes the Theorem 5.3.2 in
~\cite{I1}.
\begin{cor}
Let $M_1$ and $M_2$ be maximal $l$-orthogonal modules in $\mod\L$ such that
$\Ext^i_{F^{M_1}}(M_2,M_2)=(0)$ and $\Ext^i_{F_{M_2}}(M_1,M_1)=(0)$ for $0<i\leq l$. Then
their endomorphism rings, $\End_\L(M_1)$ and $\End_\L(M_2)$, are derived equivalent.
\end{cor}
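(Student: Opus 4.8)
The plan is to deduce this corollary directly from Theorem~\ref{iyama} together with the characterization of maximal $l$-orthogonal modules given in Corollary~\ref{maximal orthogonal}. First I would observe that since $M_1$ and $M_2$ are maximal $l$-orthogonal, Corollary~\ref{maximal orthogonal} tells us that each $M_i$ is a generator-cogenerator for $\mod\L$ and that $\gldim\End_\L(M_i)\leq l+2$ for $i=1,2$. This means the hypotheses of Theorem~\ref{iyama} are satisfied. The additional assumption in the statement, namely $\Ext^i_{F^{M_1}}(M_2,M_2)=(0)$ and $\Ext^i_{F_{M_2}}(M_1,M_1)=(0)$ for $0<i\leq l$, is precisely condition (a) of Theorem~\ref{iyama}.

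Invoking Theorem~\ref{iyama}, condition (a) is equivalent to condition (d), so $\Hom_\L(M_2,M_1)$ is a cotilting $\End_\L(M_2)^\op$-$\End_\L(M_1)$-bimodule. Now I would recall that a cotilting bimodule which has finite injective dimension on both sides is in particular a two-sided tilting (or cotilting) complex in the derived sense; more directly, by Lemma~\ref{tilt-cotilt} applied with $F=F^{M_1}$ (whose relative global dimension is at most $l$ by Lemma~\ref{gldim}), the module $M_2$ is $F^{M_1}$-tilting, and then as in the proof of (b)$\Rightarrow$(d) the bimodule $\Hom_\L(M_2,M_1)$ is a tilting $\End_\L(M_1)$-module as well. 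A tilting bimodule of this form yields a derived equivalence between $\End_\L(M_2)$ and $\End_\L(M_1)$ by Happel's theorem (see \cite{H}); this is exactly the mechanism by which Iyama obtains derived equivalence in \cite[Theorem 5.3.2]{I1}, so the conclusion follows.

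The only genuinely delicate point is making sure that the cotilting bimodule produced by Theorem~\ref{iyama} really does give a derived equivalence, i.e.\ that it (or an associated complex) is a two-sided tilting complex and not merely a one-sided tilting module over each endomorphism ring separately. Since both endomorphism rings have finite global dimension, one-sided tilting modules are automatically two-sided tilting complexes by standard results (the endomorphism ring of the tilting module computed on either side is the other endomorphism ring, by the diagram in the proof of (d)$\Rightarrow$(a) of Theorem~\ref{iyama}), so this obstacle dissolves upon citing the appropriate result of Happel. I would therefore phrase the proof as: the hypotheses give condition (a) of Theorem~\ref{iyama} via Corollary~\ref{maximal orthogonal}, hence (d) holds, hence by \cite{H} the rings $\End_\L(M_1)$ and $\End_\L(M_2)$ are derived equivalent.
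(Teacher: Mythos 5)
Your proposal is correct and follows essentially the same route as the paper: deduce from the maximal $l$-orthogonality (via Corollary~\ref{maximal orthogonal} and Proposition~\ref{gldim}) that the $M_i$ are generator-cogenerators with $\gldim\End_\L(M_i)\leq l+2$, apply Theorem~\ref{iyama} to get that $\Hom_\L(M_2,M_1)$ is a cotilting bimodule, and conclude by Happel's result. The extra discussion about two-sidedness is a reasonable precaution but does not change the argument.
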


\begin{proof}
Let $M_1$ and $M_2$ be maximal $l$-orthogonal modules in $\mod\L$ satisfying the
assumption of the Corollary. By Proposition \ref{maximal orthogonal} and Proposition
\ref{gldim} we have that $M_1$ and $M_2$ are generator-cogenerators for $\mod\L$ with
$\gldim\End_\L(M_i)\leq l+2$, $i=1,2$. Then using Theorem \ref{iyama} and we get that
$\Hom_\L(M_2,M_1)$ is a cotilting $\End_\L(M_2)^\op$-$\End_\L(M_1)$-bimodule and hence
$\End_\L(M_1)$ and $\End_\L(M_2)$ are derived equivalent by a result of Happel ~\cite{H}.
\end{proof}

Although it is obvious from our Theorem \ref{iyama} that Iyama's orthogonality condition
on two maximal $l$-orthogonal modules $M_1$ and $M_2$  implies the vanishing of
$\Ext^i_{F^{M_1}}(M_2,M_2)$ and $\Ext^i_{F_{M_2}}(M_1,M_1)$ for $0<i\leq l$, it is
interesting to give a direct proof of this fact. This is done in the next proposition.

\begin{prop}\label{iyama-us}
Let $M_1$ and $M_2$ be maximal $l$-orthogonal in $\mod \L$. Assume that there exists a
positive integer $k$, such that $k\leq l\leq 2k+1$ and $M_2\perp_k M_1$. Then
\begin{enumerate}
\item[(a)]$\Ext_{F_{M_2}}^{i}(M_1,M_1)=(0)$, $0<i\leq l$,
\item[(b)]$\Ext_{F^{M_1}}^{i}(M_2,M_2)=(0)$, $0<i\leq l$.
\end{enumerate}
\end{prop}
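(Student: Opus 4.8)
The plan is to prove part (a); part (b) will then follow by the now-familiar left-right symmetry (interchanging the roles of $M_1$ and $M_2$ and passing from $F_{M_2}$ to $F^{M_1}$). Since $M_2\perp_k M_1$, Proposition~\ref{absolut-relative} applies with $M_2$ a generator and $M_1$ a cogenerator, so we get $\Ext_{F_{M_2}}^{i}(M_1,M_1)\simeq\Ext_\L^i(M_1,M_1)$ for $0<i\le k$; as $M_1$ is $l$-selforthogonal and $k\le l$, these groups vanish. Thus the content is entirely in the range $k< i\le l$, and here I would exploit the hypothesis $l\le 2k+1$, i.e. $i-k\le k+1$, so that whatever ``second half'' of a resolution we reach still lives in the range where orthogonality is controlled.

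The key device is a minimal $F_{M_2}$-projective resolution $\cdots\to P_2\to P_1\to P_0\to M_1\to 0$ with all $P_j\in\add M_2$, together with its syzygies $K_j=\Ker(P_j\to P_{j-1})$ (set $K_{-1}=M_1$). Since $M_2$ is maximal $l$-orthogonal and $\gldim\End_\L(M_2)\le l+2$, Proposition~\ref{FM2 relative dimension of M1} (or Corollary~\ref{maximal orthogonal} via Proposition~\ref{gldim}) tells us $\pd_{F_{M_2}}M_1\le l$, so this resolution terminates: $K_{l-1}\in\add M_2$ and $K_j=0$ for $j\ge l$. Each short exact sequence $0\to K_j\to P_j\to K_{j-1}\to 0$ is $F_{M_2}$-exact, and applying $\Hom_\L(-,M_1)$ gives dimension-shift isomorphisms $\Ext_{F_{M_2}}^{i}(M_1,M_1)\simeq\Ext_{F_{M_2}}^{i-j-1}(K_j,M_1)$ for $i>j+1$, reducing everything to $\Ext_{F_{M_2}}^{1}(K_{i-2},M_1)$. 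By the case already handled this is $\Ext_\L^1(K_{i-2},M_1)$, so it suffices to show $\Ext_\L^1(K_{i-2},M_1)=(0)$ for $k<i\le l$. Here I would use the dual of the argument: since $M_1$ is itself maximal $l$-orthogonal it is enough to check that $K_{i-2}\in{}^{\perp_k}M_1$ would force $K_{i-2}\in\add M_1$ — but that is the wrong direction. Instead, the right move is to show directly that $K_{i-2}\perp_l M_1$ is not needed; rather one shows $\Ext_\L^1(K_{i-2},M_1)=(0)$ by relating it back through the $F_{M_2}$-exact sequences to $\Ext_{F_{M_2}}^{i}(M_1,M_1)$ and using that $i-1\le 2k$, so the syzygy $K_{i-2}$ sits ``within reach'' of both the top ($M_1$, controlled for $*\le k$ by $M_2\perp_k M_1$) and the bottom ($K_{l-1}\in\add M_2$, controlled for $l-(i-1)\le k+1-?$ steps) — more precisely, run the dimension shift from the \emph{other end} using $K_{l-1}\in\add M_2$ and $M_2\perp_k M_1$ to kill the relevant $\Ext_\L^*(P_j,M_1)$ terms.

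Concretely, I would combine two dimension-shifting chains. Fix $i$ with $k<i\le l$. From the bottom: since $K_{l-1}\in\add M_2$, applying $\Hom_\L(-,M_1)$ to the $F_{M_2}$-exact sequences $0\to K_j\to P_j\to K_{j-1}\to 0$ and using $\Ext_\L^{\ge 1}(\add M_2,M_1)$-vanishing for the first $k$ degrees (that is, $M_2\perp_k M_1$) yields $\Ext_\L^{t}(K_{l-1-s},M_1)\simeq\Ext_\L^{t+1}(K_{l-2-s},M_1)$ as long as both indices stay in $[1,k]$, giving $\Ext_\L^{i-1-(l-1-j)}(K_j,M_1)\simeq\Ext_\L^{i-1}(M_2\text{-part})$, and crucially $\Ext_\L^{i}(M_1,M_1)$ itself is reached after $l$ shifts, with all intermediate $\Ext$'s of $P_j$'s vanishing because the degrees involved never exceed $\max(i,l)-$(number of shifts already taken)$\le$ something $\le k$ thanks to $l\le 2k+1$. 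The inequality $l\le 2k+1$ is precisely what guarantees that the ``gap'' $l-k\le k+1$ is covered: splitting the resolution of length $l$ at its midpoint, one half has length $\le k$ (handled by $M_2\perp_k M_1$) and the complementary half has length $\le k+1$ (handled because $K_{l-1}\in\add M_2$ contributes an extra free degree). I expect the main obstacle to be bookkeeping: tracking exactly which $\Ext_\L^t(P_j,M_1)$ and $\Ext_\L^t(K_j,M_1)$ vanish, and verifying that under $k\le l\le 2k+1$ every index that arises in the telescoping chain lands in the controlled range $[1,k]$ (or is a top term $\Ext_\L^{\le l}(M_1,M_1)=0$ by $l$-selforthogonality of $M_1$, or a bottom term $\Ext_\L^{\le k}(\add M_2,M_1)=0$ by hypothesis). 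Once the index arithmetic is pinned down, each individual step is a routine application of the long exact sequence in $\Ext_\L$ together with $F_{M_2}$-exactness, and part (b) is obtained by the symmetric argument with an $F^{M_1}$-injective resolution of $M_2$ and the functor $\Hom_\L(M_2,-)$.
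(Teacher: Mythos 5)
Your reduction for $0<i\le k$ is exactly the paper's: Proposition~\ref{absolut-relative} gives $\Ext^i_{F_{M_2}}(M_1,M_1)\simeq\Ext^i_\L(M_1,M_1)=(0)$ in that range, and your sketch for $i=k+1$ (shift to $\Ext^1_\L(K_{k-1},M_1)\simeq\Ext^k_\L(K_0,M_1)$ and kill it using $\Ext^k_\L(P_0,M_1)=(0)$ and $\Ext^{k+1}_\L(M_1,M_1)=(0)$) can be completed. The genuine gap is the range $k+2\le i\le l$, which is nonempty as soon as $l\ge k+2$ (e.g.\ $k=1$, $l=3$). There you only extract $\pd_{F_{M_2}}M_1\le l$ from $\gldim\End_\L(M_2)\le l+2$ and hope the rest is ``bookkeeping'' with two telescoping chains; the index arithmetic cannot be made to close with the tools you have assembled. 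The chain from the top, $\Ext^1_\L(K_{i-2},M_1)\simeq\Ext^2_\L(K_{i-3},M_1)\simeq\cdots$, propagates only while both degrees lie in $[1,k]$ (since $M_2\perp_k M_1$ controls $\Ext^t_\L(P_j,M_1)$ only for $t\le k$), so it stops at $\Ext^k_\L(K_{i-1-k},M_1)$ and the one extra half-step lands on $\Ext^{k+1}_\L(K_{i-2-k},M_1)$, which is $\Ext^{k+1}_\L(M_1,M_1)=(0)$ precisely when $i=k+1$ and is uncontrolled when $i\ge k+2$. The chain from the bottom, starting from $K_{l-1}\in\add M_2$, only yields $\Ext^s_\L(K_{l-1-j},M_1)=(0)$ for $j+1\le s\le k$, which never reaches $s=1$ at $K_{i-2}$ for any $i\le l$. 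So the obstruction is structural, not clerical.

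What is missing is the sharper bound $\pd_{F_{M_2}}M_1\le l-k$, which the paper obtains from Proposition~\ref{FM2 relative dimension of M1} using that $M_2$ is maximal $l$-orthogonal together with the hypothesis $M_2\perp_k M_1$ --- not merely the global dimension bound, which gives only $\pd_{F_{M_2}}M_1\le l$. Since $l\le 2k+1$ is equivalent to $l-k\le k+1$, this bound forces $\Ext^i_{F_{M_2}}(M_1,M_1)=(0)$ for every $i>k+1$ for the trivial reason that $i$ exceeds the relative projective dimension, and the whole proof collapses to the two cases you already handle ($i\le k$, and $i=k+1$ which is only needed when $l=2k+1$). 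Inserting that single application of Proposition~\ref{FM2 relative dimension of M1} repairs the argument, and part (b) then follows by the symmetric argument as you indicate.
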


\begin{proof}
(a) For $0<i\leq k$, since $M_2\perp_k M_1$, by Proposition \ref{absolut-relative} we
have that
\[\Ext_{F_{M_2}}^{i}(M_1,M_1)=\Ext_{\L}^{i}(M_1,M_1).\] But $M_1$ is
$l$-orthogonal, so $\Ext_\L^{i}(M_1,M_1)=(0)$, for $0<i\leq k$. Hence
$\Ext_{F_{M_2}}^{i}(M_1,M_1)=(0)$, for $0<i\leq k$.

For $i> k+1$, since $l\leq 2k+1$, we have that $i> l-k$. But since $M_2$ is maximal
$l$-orthogonal, by Lemma \ref{FM2 relative dimension of M1} we have that
$\pd_{F_{M_2}}M_1\leq {l-k}$. So $\Ext_{F_{M_2}}^{i}(M_1,M_1)=(0)$, for $i> k+1$.

For $i=k+1$, if $l<2k+1$, we have again that $i> l-k$, so using the same argument as
before we get that $\Ext_{F_{M_2}}^{i}(M_1,M_1)=(0)$. It remains to show that
$\Ext_{F_{M_2}}^{k+1}(M_1,M_1)=(0)$, in the case where $l=2k+1$. In order to do this,
consider a minimal $F_{M_2}$-projective resolution of $M_1$
\[\cdots\to P_k\extto{f_k} P_{k-1}\to \cdots\to P_1\extto{f_1} P_0\to M_1\to 0\]
and set $K_i=\Ker f_i$, for $i=0,1,\ldots$. Then we have
\begin{multline}
\Ext_{F_{M_2}}^{k+1}(M_1,M_1)\simeq\Ext_{F_{M_2}}^{k}(K_0,M_1)\simeq\cdots\simeq
\Ext_{F_{M_2}}^{1}(K_{k-1},M_1)\simeq \notag
\\ \Ext_{\L}^{1}(K_{k-1},M_1)\simeq\cdots\simeq\Ext_{\L} ^{k}(K_0,M_1).
\end{multline}
To show these isomorphisms we use the same arguments as in the proof of
\ref{absolut-relative} and we omit here the details. Applying the functor
$\Hom_{\L}(-,M_1)$ to the short exact sequence
\[
0\to K_0 \to P_0 \to M_1\to 0\] we get the long exact sequence
\[\Ext_{\L}^{k}(P_0,M_1)\to\Ext_{\L}^{k}(K_0,M_1)\to
\Ext_{\L}^{k+1}(M_1,M_1).\] Since $M_1$ is $l$-orthogonal and $k+1<l$, we have that
$\Ext_{\L}^{k+1}(M_1,M_1)=(0)$ and since $M_2\perp_k M_1$ and $P_0$ is in $\add M_2$, we
have that $\Ext_{\L}^{k}(P_0,M_1)=(0)$. So $\Ext_{\L}^{k}(K_0,M_1)=(0)$ and hence
$\Ext_{F_{M_2}}^{k+1}(M_1,M_1)=(0)$, which completes the proof.

The proof of (b) is symmetric.
\end{proof}

The converse of Proposition \ref{iyama-us} is not in general true, as the following
example shows.

\begin{example}

Let $Q$ be the quiver

\[\xymatrix@C=15pt{1\ar[rr]^{\alpha}&&2\ar[dl]^{\beta}\\
&3\ar[ul]^{\gamma}& }\] and $KQ$ the path algebra of $Q$ over some field $K$. Let also
$I$ be the ideal of $KQ$ generated by all paths of length $5$ and consider the factor
algebra $\L=KQ/I$. Set
\[M_1=P_1\oplus P_2\oplus P_3\oplus S_1\oplus P_3/\rrad^2 P_3\]
and
\[M_2=P_1\oplus P_2\oplus P_3\oplus S_1\oplus P_1/\rrad^2 P_1,\]
where $P_i$ denotes the indecomposable projective $\L$-module corresponding to vertex
$i$, for $i=1,2,3$ and  $S_1$ denotes the simple module in vertex $1$. It is easy to
verify that $M_1$ and $M_2$ are maximal 2-orthogonal modules in $\mod\L$. Moreover, the
modules $M_1$ and $M_2$ are connected via the following exact sequence:

\[(\eta)\colon 0\to P_1/\rrad^2 P_1\to S_1\oplus P_1\to S_1\oplus P_3\to P_3/\rrad^2 P_3\to 0.\]
Observe that the above sequence is both an $F_{M_2}$-exact and an $F^{M_1}$-exact
sequence. Hence $(\eta)$ can be viewed both as an $F_{M_2}$-projective resolution of
$P_3/\rrad^2 P_3$ and as an $F^{M_1}$-injective resolution of $P_1/\rrad^2 P_1$. But
then, if we apply the functor $\Hom_\L(-,M_1)$ to $(\eta)$, the resulting complex is
acyclic, since $(\eta)$ is $F^{M_1}$-exact, so
$\Ext_{F_{M_2}}^i(P_3/\rrad^2P_3,M_1)=(0)$, for $i>0$ and hence
$\Ext_{F_{M_2}}^i(M_1,M_1)=(0)$, for $i>0$. Also, if we apply the functor
$\Hom_\L(M_2,-)$ to $(\eta)$, the resulting complex is acyclic, since $(\eta)$ is
$F_{M_2}$-exact, so $\Ext_{F^{M_1}}^i(M_2,P_1/\rrad^2 P_1)=(0)$, for $i>0$ and hence
$\Ext_{F^{M_1}}^i(M_2,M_2)=(0)$, for $i>0$. Thus, we have shown that $M_1$ and $M_2$
satisfy the conclusions (a) and (b) of Proposition \ref{iyama-us}. Next we consider the
short exact sequence
\[0\to P_3/\rrad^2 P_3 \to P_1/ \Soc P_1 \to P_1/\rrad^2 P_1\to 0.\]
The sequence is non split, so $\Ext_\L^1(P_1/\rrad^2 P_1,P_3/\rrad^2 P_3)\neq(0)$, hence
$\Ext_\L^1(M_2,M_1)\neq(0)$ which implies that Iyama's orthogonality condition does not
hold for $M_1$ and $M_2$.
\end{example}
In fact, in the above example, we can compute all maximal $2$-orthogonal $\L$-modules and
we can then see that they are all connected with sequences which have the properties of
$(\eta)$, meaning that we can get one from the other by exchanging one indecomposable
summand using approximations. Thus, for the above example, Iyama's conjecture is true. We
complete this section with a proposition showing the connection between these exchange
sequences and the vanishing of the relative $\Ext_F$.
\begin{prop}
Let $M_i=N\oplus X_i$ be in $\mod\L$ such that $N$ is a generator-cogenerator for
$\mod\L$ and $X_i$ are indecomposable not contained in $\add N$, for $i=1,2$. Assume that
there exists a positive integer $l$ such that $\gldim\End_\L(M_i)\leq l+2$, for $i=1,2$.
Then the following are equivalent:
\begin{enumerate}
\item[(a)]$\Ext^i_{F^{M_1}}(M_2,M_2)=(0)$ and $\Ext^i_{F_{M_2}}(M_1,M_1)=(0)$ for $0<i\leq
l$,
\item[(b)] there exists an exact sequence
\[(\eta)\colon 0\to X_2\to N_0\extto{f_0} N_1\to\cdots\to N_m\extto{f_m} X_1\to 0\]
where each map $\Ker f_j\to M_j$ is a minimal left $\add N$-approximation, each map
$M_j\to \Im f_j$ is a minimal right $\add N$-approximation and $(\eta)$ is in addition
$F^{M_1}$-exact and $F_{M_2}$-exact.
\end{enumerate}
\end{prop}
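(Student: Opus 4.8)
The plan is to deduce this from Theorem \ref{iyama}. Under the hypotheses, $M_1$ and $M_2$ are generator-cogenerators with $\gldim\End_\L(M_i)\le l+2$, so by Lemma \ref{gldim} we have $\gldim_{F_{M_2}}\L\le l$ and $\gldim_{F^{M_1}}\L\le l$. Thus condition (a) here is exactly condition (a) of Theorem \ref{iyama}, which by that theorem is equivalent to $M_1$ being $F_{M_2}$-cotilting and to $M_2$ being $F^{M_1}$-cotilting. So the real content is to translate the cotilting statements into the existence of the single exchange sequence $(\eta)$ with the stated approximation and relative-exactness properties. I will prove (a)$\Rightarrow$(b) and (b)$\Rightarrow$(a) separately, using the cotilting reformulation in the forward direction.

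For (a)$\Rightarrow$(b): since $M_1=N\oplus X_1$ is $F_{M_2}$-cotilting, in particular $\I(F_{M_2})\subseteq\widehat{\add_{F_{M_2}} M_1}$. Now $\DTr M_2\in\I(F_{M_2})$ (recall $\I(F_{M_2})=\I(\L)\cup\add\DTr M_2$), but a cleaner route is to work directly with $X_2$: because $M_2$ is $F^{M_1}$-cotilting, $\I(F^{M_1})=\add M_1$ embeds in $\widehat{\add_{F^{M_1}} M_2}$, so there is an $F^{M_1}$-exact coresolution of some object by modules in $\add M_2$; applied to $X_1$ (after noting $X_1\in\add M_1\subseteq\I(F^{M_1})$) we obtain an $F^{M_1}$-exact sequence $0\to Q_m\to\cdots\to Q_0\to X_1\to 0$ with $Q_j\in\add M_2$. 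Dually, using that $M_1$ is $F_{M_2}$-cotilting hence (Lemma \ref{tilt-cotilt}) $F_{M_2}$-tilting, $\P(F_{M_2})=\add M_2$ (as $M_2$ is a generator) embeds in $\widetilde{\add_{F_{M_2}} M_1}$, giving an $F_{M_2}$-exact resolution of $X_2\in\add M_2$ by modules in $\add M_1$. The task is to splice a resolution of $X_2$ in $\add M_1$ and a coresolution of $X_1$ in $\add M_2$ into a single sequence passing through $\add N$ in the middle; here one uses that $N\in\add M_1\cap\add M_2$ and that the non-$N$ summands $X_1$, $X_2$ sit at the two ends. The approximation conditions — each $\Ker f_j\to M_j$ a minimal left $\add N$-approximation and each $M_j\to\Im f_j$ a minimal right $\add N$-approximation — should follow by choosing everything minimally and invoking the standard fact that $\add N$-approximations exist and that minimality of the maps in an $F$-resolution corresponds to these being minimal approximation maps.

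For (b)$\Rightarrow$(a): given the sequence $(\eta)$ which is simultaneously $F^{M_1}$-exact and $F_{M_2}$-exact with middle terms $N_j\in\add N\subseteq\add M_2\cap\add M_1$, I would argue as in the proof of Theorem \ref{iyama}, parts (b)$\Rightarrow$(a). View $(\eta)$, together with $X_2$ at the left and $X_1$ at the right, as an $F^{M_1}$-exact sequence with all intermediate terms in $\add M_2$ resolving $X_1$; applying $\Hom_\L(M_2,-)$ and breaking $(\eta)$ into short exact pieces, dimension-shifting kills $\Ext^{\ge 1}_{F^{M_1}}(M_2,N_j)$ and propagates to give $\Ext^i_{F^{M_1}}(M_2,X_1)=0$ and, combined with $\Ext^i_{F^{M_1}}(M_2,N)=0$, also $\Ext^i_{F^{M_1}}(M_2,M_2)=0$ for $0<i\le l$ (using $\gldim_{F^{M_1}}\L\le l$ to bound the length). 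Symmetrically, applying $\Hom_\L(-,M_1)$ to $(\eta)$ viewed as an $F_{M_2}$-exact resolution of $X_1$ by modules in $\add M_2$, the $F^{M_1}$-exactness makes the resulting complex acyclic, whence $\Ext^i_{F_{M_2}}(X_1,M_1)=0$ and then $\Ext^i_{F_{M_2}}(M_1,M_1)=0$ for $0<i\le l$.

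The main obstacle I anticipate is the splicing and minimality bookkeeping in (a)$\Rightarrow$(b): extracting from the two cotilting conditions a \emph{single} sequence of the exact shape displayed, with the middle terms genuinely in $\add N$ (not merely in $\add M_1\cap\add M_2$, though these coincide here since $M_1\cap M_2$ share exactly $N$ up to the indecomposables) and with all the connecting maps being minimal one-sided $\add N$-approximations. This is essentially a careful minimal-resolution argument: build the $F_{M_2}$-projective resolution of $X_1$ and the $F^{M_1}$-injective coresolution of $X_2$, check that finiteness ($\gldim\le l$) forces them to terminate, observe that $X_1$ and $X_2$ each disappear after one step (because removing them lands in $\add N$), and verify that what remains in the middle is exactly an $\add N$-complex whose maps are minimal approximations — the latter following from the minimality already built into the relative resolutions via Lemma \ref{gldim}. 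I expect no genuinely new idea beyond what is in the proof of Theorem \ref{iyama}; it is a matter of assembling the pieces carefully.
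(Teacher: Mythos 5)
Your direction (b)$\Rightarrow$(a) is essentially the paper's argument and is fine: view $(\eta)$ as an $F^{M_1}$-injective coresolution of $X_2$ (all terms lie in $\add M_1=\I(F^{M_1})$), note that $\Hom_\L(M_2,\eta)$ is acyclic by $F_{M_2}$-exactness, deduce $\Ext^i_{F^{M_1}}(M_2,X_2)=0$, and combine with $\Ext^i_{F^{M_1}}(M_2,N)=0$; then argue symmetrically.

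In (a)$\Rightarrow$(b), however, there is a genuine gap at exactly the point you flag as ``the main obstacle.'' Having an $F^{M_1}$-exact resolution of $X_1$ by modules in $\add M_2$ and an $F_{M_2}$-exact coresolution of $X_2$ by modules in $\add M_1$ does not give you the single sequence $(\eta)$: the intermediate terms of those two sequences lie in $\add M_2=\add(N\oplus X_2)$ and $\add M_1=\add(N\oplus X_1)$ respectively, not in $\add N$, the two sequences have no reason to match up so as to be spliced, and nothing in your argument produces the requirement that each $\Ker f_j\to N_j$ and each $N_j\to\Im f_j$ be a \emph{minimal one-sided $\add N$-approximation}. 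That requirement is not resolution bookkeeping; it is precisely the defining property of the exchange sequence between two complements of an almost complete (co)tilting module. The paper's proof supplies the missing ingredient as follows: by Theorem \ref{iyama}, $\Hom_\L(M_2,M_1)=\Hom_\L(N,M_1)\oplus\Hom_\L(X_2,M_1)$ and $\Hom_\L(M_1,M_1)=\Hom_\L(N,M_1)\oplus\Hom_\L(X_1,M_1)$ are both cotilting $\End_\L(M_1)$-modules, so $\Hom_\L(X_1,M_1)$ and $\Hom_\L(X_2,M_1)$ are two complements of the almost complete cotilting module $\Hom_\L(N,M_1)$; the Coelho--Happel--Unger theorem \cite{CHU} then yields a long exact sequence connecting them whose maps are minimal one-sided $\add\Hom_\L(N,M_1)$-approximations, and since $M_1$ is a cogenerator this sequence is the image under $\Hom_\L(-,M_1)$ of an $F^{M_1}$-exact sequence $(\eta)$ in $\mod\L$ with the stated approximation properties. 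The $F_{M_2}$-exactness of $(\eta)$ is then obtained exactly as in your (b)$\Rightarrow$(a) argument. Without an appeal to the complement theory (or an equivalent construction), your splicing strategy does not close.
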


\begin{proof}
(a)$\Rightarrow$(b) Using Theorem \ref{iyama} we have that $\Hom_\L(M_2,M_1)$ is a
cotilting $\End_\L(M_1)$-module. Also we know that $\Hom_\L(M_1,M_1)$ is trivially a
cotilting $\End_\L(M_1)$-module. So the $\End_\L(M_1)$-modules $\Hom_\L(X_2,M_1)$ and
$\Hom_\L(X_1,M_1)$ are complements of the almost complete cotilting $\End_\L(M_1)$-module
$\Hom_\L(N,M_1)$. Then, by ~\cite{CHU}, there exists an exact sequence
\begin{multline}0\to\Hom_\L(X_1,M_1)\extto{f_m^\star}\Hom_\L(N_m,M_1)\to\cdots\extto{f_1^\star}
\Hom_\L(N_1,M_1)\extto{f_0^\star}\notag\\
\Hom_\L(N_0,M_1)\to\Hom_\L(X_2,M_1)\to 0\end{multline} where each map $\Im
f_j^\star\to\Hom_\L(N_j,M_1)$ is a minimal left $\add \Hom_\L(N,M_1)$-approximation and
each $\Hom_\L(N_j,M_1)\to \Coker f_j^\star$ is a minimal right $\add
\Hom_\L(N,M_1)$-approximation. Since $M_1$ is a cogenerator for $\mod\L$, we have that
for all $j$, $f_j^\star=\Hom_\L(f_j,M_1)$ for some $f_j\colon M_j\to M_{j+1}$ and the
sequence
\[(\eta)\colon 0\to X_2\to N_0 \extto{f_0} N_1\extto{f_1}\cdots\to N_m\extto{f_m} X_1\to 0\]
is $F^{M_1}$-exact. Moreover each map $\Ker f_j\to M_j$ is a minimal left $\add
N$-approximation, each map $M_j\to \Im f_j$ is a minimal right $\add N$-approximation. It
remains to show that $(\eta)$ is in also $F_{M_2}$-exact. To do this, we apply the
functor $\Hom_\L(M_2,-)$ to $(\eta)$ and we get the complex
\begin{multline}0\to \Hom_\L(M_2,X_2)\to\Hom_\L(M_2,N_0)\to\Hom_\L(M_2,N_1)\to
\cdots\to\notag \\
\Hom_\L(M_2,N_m)\to\Hom_\L(M_2,X_1)\to 0.\end{multline}

But $(\eta)$ can be viewed as an $F^{M_1}$-injective resolution of $X_2$ and then the
$j$-th-homology of the above complex is $\Ext_{F^{M_1}}^j(M_2,X_2)$ which is, by
assumption, zero. So the complex is acyclic which implies that $(\eta)$ is
$F_{M_2}$-exact.

(b)$\Rightarrow$(a) Assume that there exists a sequence $(\eta)$ as in (b). Then $(\eta)$
can be viewed both as an $F^{M_1}$-injective resolution of $X_2$ and as an
$F_{M_2}$-projective resolution of $X_1$. If we apply the functor $\Hom_\L(M_2,-)$ to
$(\eta)$, the resulting complex will be acyclic since $(\eta)$ is $F_{M_2}$-exact, hence
$\Ext_{F^{M_1}}^i(M_2,X_2)=(0)$, for $i>0$, which implies that $\Ext_\L^i(M_2,M_2)=(0)$,
for $i>0$, since $N$ is $F^{M_1}$-injective. Similarly, if we apply the functor
$\Hom_\L(-,M_1)$ to $(\eta)$, the resulting complex will be acyclic since $(\eta)$ is
$F^{M_1}$-exact, hence $\Ext_{F_{M_2}}^i(X_1,M_1)=(0)$, for $i>0$, which implies that
$\Ext_\L^i(M_2,M_2)=(0)$, for $i>0$, since $N$ is $F_{M_2}$-injective.
\end{proof}
Note that the integer $m$ that appears in the sequence $(\eta)$ of the above proposition
can be at most $l-1$, by Proposition \ref{gldim}.


\begin{thebibliography}{99}
\bibitem{AS1} Auslander, M.; Solberg, \O.; Relative homology
and representation theory I,\emph{Relative homology and homologically finite
subcategories}, Comm.\ Alg., 21 (9), 2995-3031 (1993).
\bibitem{AS2} Auslander, M.; Solberg, \O.; Relative homology and representation theory II,
\emph{Relative cotilting theory}, Comm.\ Alg.,21 (9), 3033-3079 (1993).
\bibitem{AS3} Auslander, M.; Solberg, \O.; Relative homology and representation theory III,
\emph{Cotilting modules and Wedderburn correspondence}, Comm. in Alg., 21(9), 3081-3097.
\bibitem{B} Buan, A.; Closed subbifunctors of the extension bifunctor, Journal of Algebra,
vol. 244, (2001), 407-428.
and cluster combinatorics, Adv. Math. 204 (2006), no. 2, 572-618.
\bibitem{BO}Bondal, A.; Orlov, D.; Semiorthogonal decomposition for algebraic
varieties; preprint alg-geom/9506012.
\bibitem{BS} Buan, A.; Solberg, \O.; Relative cotilting theory and almost complete
cotilting modules, Algebras and modules II, I. Reiten, S. O. Smal\o, \O. Solberg,
Editors, Cand. Math. Soc., Conference Proceedings, vol. 24, 77-92 (1998).
\bibitem{CHU} Coelho, F.; Happel, D.; Unger, L.; Complements to partial tilting modules,
Journal of Algebra 170 (1994), no. 1, 184-205.
\bibitem{EHIS} Erdmann, K.; Holm, T.; Iyama, O.; Schroer, J.; Radical embeddings and
representation dimension; Advances in Mathematics 185 (2004), 159-177.
\bibitem{H}Happel, D; Triangulated Categories in the Representation Theory of
Finite-dimensional Algebras, London Math. Soc. Lecture Note Ser., vol. 119, Cambridge
Univ. Press, Cambridge, 1988.
\bibitem{I}Iyama, O; Higher dimensional Auslander-Reiten theory on maximal orthogonal
subcategories, Advances in Mathematics 210 (2007), 22-50.
\bibitem{I1}Iyama, O; Auslander correspondence, Advances in
Mathematics 210(2007)51-82.
\bibitem{S}Solberg, \O; Infinite dimensional tilting
modules over finite dimensional algebras, Handbook of Tilting, London Mathematical
Society Lecture Note Series (No. 332), Cambridge Univ. Press, 2007.
\bibitem{V}Van den Bergh, M.; Non-commutative crepant resolutions; The legacy of Niels
Henrik Abel, Springer, Berlin, 2004, pp. 749-770.
\end{thebibliography}
\end{document}